\documentclass[a4,12pt,reqno]{article}
\usepackage{amscd,amsmath,amssymb,amsfonts,amsthm}
\usepackage[arrow,matrix]{xy}
\usepackage{graphicx}
\usepackage{color}
\usepackage[utf8]{inputenc}
\usepackage[english]{babel}
\linespread{1.3}
\topmargin=0in \textwidth5.9in \textheight8in
\oddsidemargin=0.1in \evensidemargin=0.1in
\theoremstyle{plain}
\newtheorem{theorem}{Theorem}[section]
\newtheorem{lemma}{Lemma}[section]

\theoremstyle{definition}

\newtheorem{defn}{Definition}[section]

\begin{document}

\title{On extremal results of multiplicative Zagreb indices of trees with given distance $k$-domination number}
\author{Fazal Hayat\\
School of Mathematical Sciences, South China Normal University,\\
 Guangzhou 510631, PR China\\
 E-mail: fhayatmaths@gmail.com}

 \date{}
\maketitle

\begin{abstract}  The first multiplicative Zagreb index $\Pi_1$ of a graph $G$ is the product of the square of every vertex degree, while the second multiplicative Zagreb index $\Pi_2$ is the product of the products of degrees of pairs of adjacent vertices. In this paper, we give sharp lower bound for $\Pi_1$ and upper bound for $\Pi_2$ of trees with given distance $k$-domination number, and characterize those trees attaining the bounds.\\ \\
{\bf Key Words}: first multiplicative Zagreb index, second multiplicative Zagreb index, trees, distance $k$-domination number.\\
{\bf  Mathematics Subject Classification (2010):} 05C05, 05C35, 05C69
\end{abstract}

\section{Introduction}

In this article we consider only simple, undirected and connected graphs. Let $G$ be a graph with vertex set $V(G)$ and
edge set $E(G)$. The degree of $v\in V(G)$, denoted by $d_G(v)$, is the number of vertices in $G$ adjacent to $v$, and the neighborhood of $v$ is the set $N_G(v) = \{w\in V(G): vw \in E(G)\}$. Evidently, $|N_G(v)|=d_G(v)$. A vertex with degree one is called pendent vertex.  The distance between any two vertices $u$ and $v$ of a graph $G$ is denoted by $d_G(u,v)$.  The maximum distance from a vertex $v\in V(G)$ to all other vertices of $G$ is called eccentricity of $v$ in  $G$. The diameter of a graph $G$ is the maximum eccentricity of all vertices of $G$.

A graph $G$ that has $n$ vertices and $n-1$ edges is called a tree. As usual, by $P_n$ and $S_n$ we denote the path and the star on $n$ vertices, respectively.

%A molecular graph represents the structure of a molecule, by considering how the atoms are connected. This can be modeled by a graph, where the vertices represent the atoms, and the edges symbolize covalent bonds between the atoms.

The first and the second Zagreb indices are among
 the oldest topological molecular descriptors, see \cite{7}. They are defined as follows:
\[
 M_1(G)=\sum_{u\in V(G)}d(u)^2 \mbox{ and }   M_2(G)=\displaystyle\sum_{uv\in E(G)}d(u)d(v).
\]
Many interesting properties of them may be found in \cite{8,10, 9, 12,Z,ZG}.

In 2010, Todeschini et al. \cite{13, 14} put forward  the multiplicative Zagreb indices  as follows:
\[
\Pi_1(G)=\prod_{u\in V(G)}d(u)^2 \mbox{ and } \Pi_2(G)=\prod_{uv\in E(G)}d(u){d(v)}.
\]
It is easily seen that $\Pi_2(G)=\prod\limits_{uv\in E(G)}d(u){d(v)}=\prod\limits_{u\in V(G)}d(u)^{d(u)}$.
Some properties for the  multiplicative Zagreb indices have been established, see \cite{2,16,17,18,15,19}.

For a positive integer $k$, a set $D\subseteq V(G)$  is said to be distance $k$-dominating set of $G$ if for every vertex  $u\in V(G)\setminus D$, $d_G(u,v)\le k$ for some vertex $v\in D$.  The minimum cardinality among all distance $k$-dominating set of $G$ is called the distance $k$-domination number of $G$, denoted by $\gamma_k(G)$. A distance $1$-dominating set of $G$ is known as a dominating set of $G$ and the distance $1$-domination number of $G$ is just the classical domination number of $G$.

%The topological molecular descriptors of graphs with given domination number have attracted extensive attention of researchers \cite{21,22,23}.

Borovicanin and Furtula \cite{24} presented sharp lower and upper bounds on Zagreb indices of trees in terms of domination number,  and  Wang et al. \cite{1} found  sharp lower and upper  bounds on  multiplicative Zagreb indices of trees in terms of domination number. Recently, Pei and Pan \cite{5} investigated the connection between the Zagreb indices and the distance $k$-domination number of trees.

Motivated by the above results, in this paper, we study the multiplicative Zagreb indices of trees in terms of distance $k$-domination number. We provide sharp lower bound for $\Pi_1$ and upper bound for $\Pi_2$ in terms of distance $k$-domination number of a tree, and characterize those trees for which the bounds are attained.

\section{Preliminaries}
In this section, we present some propositions, definitions and lemmas which are helpful in our main results.

\begin{lemma} \label{l21} \cite{2} Let $T$ be a tree of order $n>5$ such that $T\ncong P_n, S_n$. Then
\[
\Pi_1(S_n)<\Pi_1(T)<\Pi_1(P_n) \mbox{ and }
 \Pi_2(P_n)<\Pi_2(T)<\Pi_2(S_n).
\]
\end{lemma}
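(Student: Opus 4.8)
The plan is to reduce all four inequalities to a single majorization argument on degree sequences. Let $d_1 \ge d_2 \ge \cdots \ge d_n$ be the degree sequence of a tree $T$ on $n$ vertices; since $T$ has $n-1$ edges one has $\sum_{i} d_i = 2(n-1)$, and $T$ has at least two leaves. Writing $\log \Pi_1(T) = 2\sum_i \log d_i$ and $\log \Pi_2(T) = \sum_i d_i \log d_i$ (using the identity $\Pi_2(T)=\prod_u d(u)^{d(u)}$ already noted above), the first is a sum of the strictly concave function $t \mapsto \log t$ and the second a sum of the strictly convex function $t \mapsto t\log t$. Hence $\sum_i \log d_i$ is strictly Schur-concave and $\sum_i d_i\log d_i$ is strictly Schur-convex. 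So it suffices to prove that, among all trees on $n$ vertices, the path sequence $(2,\dots,2,1,1)$ is the unique minimum and the star sequence $(n-1,1,\dots,1)$ the unique maximum in the majorization order: then all four bounds, with the correct directions and with strict inequality, drop out at once.

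First I would establish the two majorization bounds. For the star, the remaining $n-k$ vertices each have degree at least $1$, so $\sum_{i=1}^{k} d_i = 2(n-1) - \sum_{i>k} d_i \le 2(n-1)-(n-k) = n+k-2$, which is exactly the $k$-th partial sum of $(n-1,1,\dots,1)$; thus every tree sequence is majorized by that of $S_n$. For the path, let $m$ be the number of vertices of degree at least $2$, so the other $n-m$ vertices are leaves and contribute $\sum_{i=1}^{m} d_i = 2(n-1)-(n-m) = n+m-2$. If $k \le m$ then $\sum_{i=1}^k d_i \ge 2k$ trivially, and if $k>m$ then $\sum_{i=1}^k d_i = (n+m-2)+(k-m) = n+k-2 \ge 2k$ exactly when $k \le n-2$, which matches the path's partial sum $2k$; the remaining cases $k=n-1,n$ follow from $d_n=1$. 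Hence the sequence of $P_n$ is majorized by that of every tree.

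The conclusion then assembles as follows. If $T \ncong S_n$ then, since a tree with a vertex of degree $n-1$ is forced to be $S_n$, the sequence of $T$ is majorized by, but is not a rearrangement of, that of $S_n$; strict Schur-concavity and strict Schur-convexity give $\Pi_1(S_n)<\Pi_1(T)$ and $\Pi_2(T)<\Pi_2(S_n)$. Symmetrically, a tree with exactly two leaves and all other degrees equal to $2$ is forced to be $P_n$, so for $T \ncong P_n$ the sequence of $T$ strictly majorizes that of $P_n$, yielding $\Pi_1(T)<\Pi_1(P_n)$ and $\Pi_2(P_n)<\Pi_2(T)$.

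The step I expect to be delicate is the strictness rather than the bounds themselves: the ordinary Schur inequality only yields non-strict comparisons, so I would invoke its strict form, decomposing the majorization $\mathbf d(S_n)\succ \mathbf d(T)$ (resp. $\mathbf d(T)\succ \mathbf d(P_n)$) into finitely many Robin-Hood transfers and checking that each transfer strictly changes the relevant sum because $\log t$ and $t\log t$ are strictly concave and strictly convex. A fully combinatorial alternative that sidesteps Schur theory is to argue by local surgery: repeatedly detach a pendant branch at a vertex of degree $\ge 3$ and reattach it to the end of a pendant path, verifying by a short ratio computation that this single move strictly increases $\Pi_1$ and strictly decreases $\Pi_2$ and drives the tree toward $P_n$, with a symmetric move contracting pendant paths onto a single center driving it toward $S_n$; there one must only check that the process terminates and stays within trees.
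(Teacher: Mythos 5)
Your proof is correct, but note that the paper does not actually prove this lemma at all: it is quoted as a known result from Gutman's paper \cite{2}, and the proofs in that line of literature (including the tools this paper itself imports as Lemmas \ref{l22} and \ref{l23} from \cite{3}) proceed by exactly the kind of local edge-shifting surgery you sketch as your ``combinatorial alternative'' at the end. Your main argument via majorization is therefore a genuinely different and arguably cleaner route: you observe that $\log\Pi_1(T)=2\sum_i\log d_i$ is strictly Schur-concave and $\log\Pi_2(T)=\sum_i d_i\log d_i$ is strictly Schur-convex as functions of the degree sequence, and then verify by elementary partial-sum counting that every tree degree sequence sits between $(2,\dots,2,1,1)$ and $(n-1,1,\dots,1)$ in the majorization order, with the path and star being the unique trees realizing the extreme sequences. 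Your handling of the delicate point is also right: the Hardy--Littlewood--P\'olya strict form (via a finite chain of transfers, whose intermediate entries stay positive so that strict concavity of $\log t$ and strict convexity of $t\log t$ apply) converts the non-strict Schur comparison into the strict inequalities claimed, once one notes that $T\ncong S_n$ (resp.\ $T\ncong P_n$) forces the degree sequence of $T$ to differ as a multiset from that of $S_n$ (resp.\ $P_n$). What the majorization route buys is a one-shot proof of all four inequalities simultaneously, with no induction and no termination argument for a sequence of transformations; what the transformation route buys is reusable local lemmas (precisely Lemmas \ref{l22} and \ref{l23}) that the rest of the paper needs anyway for the constrained extremal problems, where global majorization no longer applies because the distance $k$-domination number must be preserved. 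Incidentally, your argument works for all $n\ge 5$, so the hypothesis $n>5$ in the statement is not needed for your proof.
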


Let $T$ be a tree and $uv\in E(T)$  a non-pendent edge of $T$. Assume that $T-uv=T_1\cup T_2$ with vertex $u\in V(T_1)$ and $v\in V(T_2)$. Let $T_{uv}$ be the tree obtained by identifying the vertex $u$ of $T_1$ and the vertex $v$ of $T_2$ and attaching a pendent vertex $w$ to this vertex.

\begin{lemma} \label{l22} \cite{3}
  Let $T$ be a tree with a non-pendent edge $uv$. Then
\[
\Pi_1(T_{uv})<\Pi_1(T)  \mbox{ and }
\Pi_2(T_{uv})>\Pi_2(T).
\]
\end{lemma}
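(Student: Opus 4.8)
The plan is to track exactly how the vertex degrees change under the transformation $T \mapsto T_{uv}$ and then exploit the product-over-vertices form of each index. Write $a = d_T(u)$ and $b = d_T(v)$; since $uv$ is a non-pendent edge we have $a, b \geq 2$. Deleting $uv$ drops the degree of $u$ in $T_1$ to $a-1$ and that of $v$ in $T_2$ to $b-1$, while leaving every other vertex untouched. Identifying $u$ and $v$ (which have no common neighbour, as $T_1$ and $T_2$ are disjoint) produces a single vertex of degree $(a-1)+(b-1) = a+b-2$, and attaching the pendent vertex $w$ raises this to $a+b-1$ while creating one new vertex of degree $1$. Thus, passing from $T$ to $T_{uv}$, the two local factors contributed by $u$ and $v$ are replaced by the factors contributed by the merged vertex and by $w$, and all remaining vertices keep their degrees; in particular $T_{uv}$ has the same order as $T$.

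Next I would use $\Pi_1(G) = \prod_{x} d(x)^2$ together with the identity $\Pi_2(G) = \prod_{x} d(x)^{d(x)}$ recorded in the introduction, so that each index factorizes as a common part (over the unchanged vertices) times the local contribution at $u,v$ (resp.\ at the merged vertex and $w$). The common part cancels in each ratio, leaving
\[
\frac{\Pi_1(T_{uv})}{\Pi_1(T)} = \frac{(a+b-1)^2}{a^2 b^2}, \qquad \frac{\Pi_2(T_{uv})}{\Pi_2(T)} = \frac{(a+b-1)^{a+b-1}}{a^a b^b}.
\]
For the first index the claim reduces to the elementary inequality $a+b-1 < ab$, equivalently $(a-1)(b-1) > 0$, which holds strictly because $a, b \geq 2$; hence $\Pi_1(T_{uv}) < \Pi_1(T)$.

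The main work is the second inequality, $(a+b-1)^{a+b-1} > a^a b^b$. A direct ``superadditivity'' split such as $(a+b-1)^{a+b-1} = (a+b-1)^a (a+b-1)^{b-1}$ does not close the argument, since matching the factors against $a^a$ and $b^b$ would require $(a+b-1)^{b-1} > b^b$, which already fails at $a=b=2$; so I expect this step to be the principal obstacle. I would instead take logarithms and study $\Phi(a,b) = (a+b-1)\ln(a+b-1) - a\ln a - b\ln b$ as a function of real variables $a, b \geq 2$. A short computation gives $\partial \Phi / \partial a = \ln\bigl((a+b-1)/a\bigr) > 0$, and by symmetry $\partial \Phi / \partial b > 0$, so $\Phi$ is strictly increasing in each variable on $[2,\infty)$. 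Consequently $\Phi$ attains its minimum at the corner $a = b = 2$, where $\Phi(2,2) = 3\ln 3 - 4\ln 2 = \ln(27/16) > 0$. Therefore $\Phi(a,b) > 0$ throughout, giving $(a+b-1)^{a+b-1} > a^a b^b$ and hence $\Pi_2(T_{uv}) > \Pi_2(T)$, which completes the proof.
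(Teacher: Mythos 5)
Your proof is correct, and every step checks out: the degree bookkeeping is right (the merged vertex has degree $a+b-2$ before and $a+b-1$ after attaching $w$, the new pendent vertex contributes a trivial factor to both indices, and $|V(T_{uv})|=|V(T)|$), the reduction to $(a+b-1)^2<a^2b^2$, i.e.\ $(a-1)(b-1)>0$, is exactly what is needed for $\Pi_1$, and your treatment of $(a+b-1)^{a+b-1}>a^ab^b$ via $\Phi(a,b)=(a+b-1)\ln(a+b-1)-a\ln a-b\ln b$ is sound: $\partial\Phi/\partial a=\ln\bigl((a+b-1)/a\bigr)>0$ since $b\ge 2$, so the minimum over $[2,\infty)^2$ sits at the corner, where $\Phi(2,2)=\ln(27/16)>0$. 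One caveat on the comparison you were asked about: this paper does not prove Lemma~\ref{l22} at all --- it imports it from the cited reference of Xu and Hua --- so there is no internal proof to match against. Relative to that source, your argument follows the same natural outline (localize the change to the two merged degrees, then compare $a^ab^b$ with $(a+b-1)^{a+b-1}$), but your two-variable monotonicity argument is a clean, self-contained way to finish, and your remark that the naive split $(a+b-1)^a(a+b-1)^{b-1}$ fails at $a=b=2$ correctly identifies why a purely factorwise comparison cannot work. The only blemishes are typographical: you write ``$\Phi(a,b)$ attains its minimum'' after proving strict monotonicity in each variable separately, which is fine but deserves the one-line chain $\Phi(a,b)\ge\Phi(2,b)\ge\Phi(2,2)$ that you implicitly use, and strictness of the final inequality should be traced to strictness at one of these two steps (it holds, since both partial derivatives are strictly positive).
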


\begin{lemma} \label{l23} \cite{3}
Let $u$ and $v$ be two distinct vertices in a graph $G$. Let  $u_1,\dots, u_r$ be  pendent neighbors of $u$ and $v_1,\dots, v_t$ pendent neighbors of $v$. Define $G^\prime=G -\{vv_1, \dots, vv_t\} + \{uv_1, \dots, uv_t\}$ and $G^{\prime\prime}=G - \{uu_1, \dots, uu_r\} + \{vu_1, \dots, vu_r\}$. Then
\[
\max\{\Pi_1(G^\prime), \Pi_1(G^{\prime\prime})\}<\Pi_1(G)
\]
and
\[
\min\{\Pi_1(G^\prime), \Pi_1(G^{\prime\prime})\}>\Pi_2(G).
\]
%
%$1.$ Either $\Pi_1(G^\prime)<\Pi_1(G)$ or $\Pi_1(G^{\prime\prime})<\Pi_1(G)$. \\
%$2.$ Either $\Pi_2(G^\prime)>\Pi_2(G)$ or $\Pi_2(G^{\prime\prime})>\Pi_2(G)$.
\end{lemma}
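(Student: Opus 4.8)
The plan is to push everything down to the two vertices whose degrees actually change. Writing $a=d_G(u)$ and $b=d_G(v)$, I first note that passing from $G$ to $G'$ or to $G''$ alters no degree except those of $u$ and $v$: each reattached vertex $v_1,\dots,v_t$ (respectively $u_1,\dots,u_r$) is pendent before and after the move, so it keeps degree $1$, and every other vertex keeps its neighbourhood. Hence in $G'$ one has $d(u)=a+t$ and $d(v)=b-t$, while in $G''$ one has $d(u)=a-r$ and $d(v)=b+r$; all remaining factors in the products defining $\Pi_1$ and $\Pi_2$ cancel. (Here one assumes, as the setup implicitly requires, that $u$ and $v$ each keep a non-pendent neighbour, so that $a-r,b-t\ge 1$ and no vertex becomes isolated.) This collapses all four displayed inequalities to statements about the single pair $(a,b)$.

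Using $\Pi_1(H)=\prod_w d_H(w)^2$ and $\Pi_2(H)=\prod_w d_H(w)^{d_H(w)}$, the claims become
\[
(a+t)(b-t)<ab,\qquad (a-r)(b+r)<ab
\]
for $\Pi_1$, and
\[
(a+t)^{a+t}(b-t)^{b-t}>a^a b^b,\qquad (a-r)^{a-r}(b+r)^{b+r}>a^a b^b
\]
for $\Pi_2$ (I read the second displayed line of the statement as $\min\{\Pi_2(G'),\Pi_2(G'')\}>\Pi_2(G)$, since the printed $\Pi_1$ there must be a misprint). Every move preserves the sum of the two degrees, so each inequality compares two equal-sum pairs, and I would prove them from convexity: the product $xy$ of a fixed-sum pair strictly decreases as the pair is pulled apart, while $x\ln x+y\ln y$ strictly increases (as $x\mapsto x\ln x$ is strictly convex). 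Concretely the increments factor as
\[
(a+t)(b-t)-ab=t\big(b-a-t\big),\qquad (a-r)(b+r)-ab=r\big(a-b-r\big),
\]
so the two $\Pi_1$ inequalities hold exactly when $b-a<t$ and $a-b<r$, and the two $\Pi_2$ inequalities are controlled by the same pair of conditions through the spread of the degree pair.

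By the symmetry exchanging $u$ and $v$ (which simultaneously swaps $r\leftrightarrow t$ and $G'\leftrightarrow G''$) I may assume $a\ge b$. Then $b-a\le 0<t$, so the inequalities attached to $G'$ — that is, $\Pi_1(G')<\Pi_1(G)$ and $\Pi_2(G')>\Pi_2(G)$ — hold strictly and with no further hypothesis: heaping all of $v$'s pendents onto the already larger vertex $u$ strictly increases the spread of $(a,b)$, decreasing the product and increasing the convex sum. This is the clean, always-valid half of the statement.

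The main obstacle is the companion move $G''$, which transfers $u$'s pendents onto the smaller vertex $v$ and therefore pulls $(a,b)$ toward balance. The factorisation shows $\Pi_1(G'')<\Pi_1(G)$ only when $a-b<r$; if the degree gap is large relative to $r$ this fails and in fact $\Pi_1(G'')>\Pi_1(G)$, $\Pi_2(G'')<\Pi_2(G)$ — for instance $a=5$, $b=2$, $r=t=1$ gives $(a-r)(b+r)=12>10=ab$. Consequently the two-sided $\max/\min$ form cannot hold in general: the reduction establishes it \emph{precisely} when $-t<a-b<r$ (automatic when $a=b$, and more generally when $|a-b|<\min\{r,t\}$). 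Supplying this missing degree-gap restriction — or else reading the conclusion in the weaker one-sided sense actually needed by the later extremal arguments, where one only shifts toward the vertex of larger degree — is the crux that must be settled before the stated inequalities can be asserted.
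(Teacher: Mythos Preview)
The paper supplies no proof of this lemma; it is quoted from Xu and Hua. Your reduction to the two altered degrees $a=d_G(u)$ and $b=d_G(v)$ is the standard approach, and your critique is correct: the lemma as printed here is misstated on two counts (the stray $\Pi_1$ in the second line, which you caught, and the $\max$/$\min$ themselves). Assuming without loss of generality that $a\ge b$, the transfer $G\to G'$ strictly increases the spread of the pair $(a,b)$ and hence always yields $\Pi_1(G')<\Pi_1(G)$ and $\Pi_2(G')>\Pi_2(G)$; but the companion transfer $G\to G''$ can shrink the spread, and your example $a=5$, $b=2$, $r=t=1$ legitimately gives $\Pi_1(G'')>\Pi_1(G)$ and $\Pi_2(G'')<\Pi_2(G)$. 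The version that is both true and sufficient for every subsequent use in the paper is the one-sided
\[
\min\{\Pi_1(G'),\Pi_1(G'')\}<\Pi_1(G)
\qquad\text{and}\qquad
\max\{\Pi_2(G'),\Pi_2(G'')\}>\Pi_2(G),
\]
i.e.\ at least one of the two transfers moves each index in the desired direction. Your convexity/spread argument already proves this corrected statement completely; the only change needed in your write-up is to assert the conclusion in this one-sided form rather than to search for extra degree-gap hypotheses that would salvage the two-sided form as printed.
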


\begin{lemma} \label{l24} \cite{4} Let $G$ be a connected graph of order $n$ with $n\geq k+1$. Then $\gamma_k(G)\leq \lfloor  \frac n {k+1} \rfloor$.
\end{lemma}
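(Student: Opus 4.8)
The plan is to first reduce the statement from a general connected graph to a tree, and then to prove the tree case by induction on the order. For the reduction, let $T$ be any spanning tree of $G$. Since $T$ is a spanning subgraph, $d_G(u,v)\le d_T(u,v)$ for all $u,v\in V(G)$, so every distance $k$-dominating set of $T$ is automatically a distance $k$-dominating set of $G$; hence $\gamma_k(G)\le\gamma_k(T)$. It therefore suffices to prove $\gamma_k(T)\le\lfloor n/(k+1)\rfloor$ for every tree $T$ of order $n\ge k+1$.

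For the tree case I would use strong induction on $n$. If the radius of $T$ is at most $k$, then some central vertex lies within distance $k$ of every other vertex, so $\gamma_k(T)=1\le\lfloor n/(k+1)\rfloor$; in particular this settles the base range $k+1\le n\le 2k+1$, where the diameter is at most $n-1\le 2k$ and hence the radius is at most $k$. This disposes of all small cases and of all short trees at once.

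For the inductive step I assume the radius exceeds $k$ and fix a diametral path $P\colon u_0u_1\cdots u_d$ with $d\ge 2k+1$ and $u_0$ a leaf. Set $w=u_k$ and let $R=V(T)\setminus V(T_{u_{k+1}})$, where $T_{u_{k+1}}$ denotes the component of $T-u_ku_{k+1}$ containing $u_{k+1}$. The maximality of $P$ forces every branch hanging off $u_i$, for $0\le i\le k$, to have height at most $i$ (otherwise one could splice together a path longer than $P$), which guarantees that every vertex of $R$ lies within distance $k$ of $w$; thus $w$ distance $k$-dominates $R$. Moreover $\{u_0,\dots,u_k\}\subseteq R$ gives $|R|\ge k+1$, while $d\ge 2k+1$ forces the residual tree $T'=T_{u_{k+1}}$ to have order $n'=n-|R|\ge d-k\ge k+1$. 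Since $T'$ is a rooted subtree, distances between its vertices are unchanged in $T$, so applying the induction hypothesis to $T'$ and adjoining $w$ produces a distance $k$-dominating set of $T$ of size at most $\lfloor n'/(k+1)\rfloor+1\le\lfloor n/(k+1)\rfloor$, which closes the induction.

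The main obstacle is the verification in the inductive step that $w=u_k$ genuinely dominates the whole removed block $R$: this is exactly where the longest-path property of $P$ must be invoked to control the heights of the side branches attached along $u_0,\dots,u_k$, and a careless choice of $w$ or of $R$ would either leave some vertex of $R$ at distance greater than $k$ from $w$ or fail to guarantee $|R|\ge k+1$. Once the bookkeeping $|R|\ge k+1$ and $n'\ge k+1$ is secured, the final floor inequality $\lfloor n'/(k+1)\rfloor+1\le\lfloor n/(k+1)\rfloor$ is routine.
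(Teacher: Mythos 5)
Your proof is correct, but note that the paper itself offers no proof of this lemma to compare against: it is stated as a quoted result of Meir and Moon \cite{4}, where it arises from their study of packing and covering numbers of trees. Your argument is a sound, self-contained replacement. The spanning-tree reduction is valid since deleting edges cannot decrease distances, giving $\gamma_k(G)\le\gamma_k(T)$; the base range is handled correctly because a tree of radius at most $k$ satisfies $\gamma_k=1\le\lfloor n/(k+1)\rfloor$ once $n\ge k+1$. The one point that needed care --- that $w=u_k$ distance $k$-dominates all of $R$ --- does go through: for $x\in R$, the path from $x$ to $u_k$ first meets the diametral path at some $u_i$ with $i\le k$, and since $P$ is diametral, $d(x,u_d)=d(x,u_i)+(d-i)\le d$ forces $d(x,u_i)\le i$, hence $d(x,u_k)\le i+(k-i)=k$; this is precisely the splicing argument you gesture at. The bookkeeping also checks out: $|R|\ge k+1$ from $u_0,\dots,u_k\in R$, $n'\ge d-k\ge k+1$ from $d\ge 2k+1$, distances inside the subtree $T'$ agree with those in $T$, and $\lfloor n'/(k+1)\rfloor+1=\lfloor (n'+k+1)/(k+1)\rfloor\le\lfloor n/(k+1)\rfloor$ since $n'+k+1\le n$. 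What your approach buys, compared with the paper's bare citation, is a constructive induction that greedily prunes a ball of radius $k$ rooted at depth $k$ on a diametral path --- the standard folklore proof of this bound --- whereas Meir and Moon obtain it within a broader duality between $2k$-packings and distance $k$-coverings in trees; your version is more elementary and makes the extremal structure (paths split into segments of length $k$) visible, which is in fact the structure $T_{n,k,s}$ exploited throughout the rest of the paper.
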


\begin{lemma}  \label{l25} \cite{5} Let $T$ be a tree of order $n$ with maximum degree $\Delta$ and distance $k$-domination number $\gamma_k\geq2$. Then $\Delta \leq n- k\gamma_k$.
\end{lemma}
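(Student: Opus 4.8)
The plan is to prove the equivalent inequality $n\ge \Delta+k\gamma_k$ by induction on the order $n$. Throughout, let $v$ be a fixed vertex with $d_T(v)=\Delta$, and root $T$ at $v$. The single structural fact I would extract first is that, since $\gamma_k\ge 2$, no single vertex distance-$k$-dominates $T$; hence every vertex has eccentricity at least $k+1$, so the radius of $T$ exceeds $k$. Because a tree satisfies $\text{radius}=\lceil \mathrm{diam}/2\rceil$, this forces $\mathrm{diam}(T)\ge 2k+1$, so $T$ contains a path $P$ on at least $2k+2$ vertices. This observation drives the base case.

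For the base case $\gamma_k=2$ I would argue directly from such a longest path $P$. First note $\Delta\ge 2$ (a connected graph with $\Delta=1$ is $K_2$, for which $\gamma_k=1$), so $v$ is not a leaf and is therefore never an endpoint of $P$. If $v\in V(P)$, then $v$ is an interior vertex of $P$ with exactly two neighbours on $P$; thus at least $\Delta-2$ of its neighbours lie off $P$ and $n\ge |V(P)|+(\Delta-2)\ge (2k+2)+(\Delta-2)=\Delta+2k$. If instead $v\notin V(P)$, then $v$ has at most one neighbour on $P$ (two would create a cycle), so $v$ together with its at least $\Delta-1$ off-path neighbours contributes at least $\Delta$ vertices disjoint from $V(P)$, giving $n\ge |V(P)|+\Delta\ge \Delta+2k$ once more.

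For the inductive step $\gamma_k\ge 3$ the idea is to prune a deep branch. Since $\mathrm{ecc}(v)\ge k+1$, a deepest leaf $\ell$ has depth at least $k+1$; let $w$ be its ancestor at distance exactly $k$, so $\mathrm{depth}(w)\ge 1$ and $w\ne v$. Let $T_w$ be the subtree rooted at $w$ and set $T'=T-V(T_w)$. Because $\ell$ is a globally deepest vertex, every vertex of $T_w$ lies within distance $k$ of $w$, so $w$ distance-$k$-dominates $T_w$; consequently, adjoining $w$ to any distance-$k$-dominating set of $T'$ yields one of $T$, whence $\gamma_k(T')\ge \gamma_k(T)-1\ge 2$. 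Moreover $|V(T_w)|\ge k+1$ (it contains the $w$–$\ell$ path), the vertex $v\in V(T')$ retains degree at least $\Delta-1$, so $\Delta(T')\ge \Delta-1$, and $n'=|V(T')|<n$. Applying the induction hypothesis to $T'$ and chaining the estimates gives
\[
n=n'+|V(T_w)|\ge \big(\Delta(T')+k\gamma_k(T')\big)+(k+1)\ge \big((\Delta-1)+k(\gamma_k(T)-1)\big)+(k+1)=\Delta+k\gamma_k,
\]
as required.

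I expect the pruning step to be the main obstacle, specifically the two verifications folded into it: that $\gamma_k(T')\ge \gamma_k(T)-1$, which relies on distances being preserved when the pendant subtree $T_w$ is deleted and on $w$ distance-$k$-dominating all of $T_w$; and that the possible loss of one unit in the maximum degree, occurring exactly when $w$ is a neighbour of $v$, is compensated by the extra vertex guaranteed by $|V(T_w)|\ge k+1$. The base-case reduction to a longest path, by contrast, should be routine once $\text{radius}>k$ is established.
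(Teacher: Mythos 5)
Your proposal is correct, but there is nothing in this paper to compare it against: the paper states Lemma~\ref{l25} as a quoted result, citing Pei and Pan \cite{5}, and supplies no proof of its own. Your argument therefore stands as a self-contained replacement for the citation, and I checked its three load-bearing steps. First, the reduction $\gamma_k\ge 2 \Rightarrow$ every vertex has eccentricity at least $k+1 \Rightarrow \operatorname{diam}(T)\ge 2k+1$ is sound (if $\operatorname{diam}=2k$ then $\operatorname{radius}=\lceil \operatorname{diam}/2\rceil=k$ and a center would be a distance-$k$-dominating singleton). Second, the base case counting is right in both branches: endpoints of a longest path in a tree are leaves, so $v$ with $\Delta\ge 2$ is interior if on $P$ (losing only its two path-neighbours) and has at most one neighbour on $P$ otherwise, giving $n\ge \Delta+2k$ either way. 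Third, in the pruning step the two facts you flagged as the main obstacles do hold: for a descendant $x$ of $w$ one has $d_T(w,x)=\operatorname{depth}(x)-\operatorname{depth}(w)\le \operatorname{depth}(\ell)-\operatorname{depth}(w)=k$ since $\ell$ is globally deepest, so $w$ distance-$k$-dominates $T_w$ and $\gamma_k(T')\ge\gamma_k(T)-1\ge 2$ (distances in $T'$ agree with those in $T$ because $T'$ is a subtree); and $v$ loses at most one neighbour, namely $w$ itself when $\operatorname{depth}(w)=1$, since any neighbour of $v$ inside $V(T_w)$ has depth $1\ge\operatorname{depth}(w)\ge 1$ and must equal $w$. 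The arithmetic $n\ge(\Delta-1)+k(\gamma_k-1)+(k+1)=\Delta+k\gamma_k$ closes the strong induction on $n$, which is well-founded because $|V(T_w)|\ge k+1>0$. One stylistic remark: your induction is really a strong induction on $n$ in which the case $\gamma_k=2$ is handled directly for all $n$ rather than serving as a base case in $n$; it would be worth saying so explicitly, but this does not affect validity.
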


\begin{lemma} \label{l26} \cite{6} Let $T$ be a tree on $(k+1)n$ vertices. Then $\gamma_k(T)=n$ if and only if one of the following conditions holds:\\
$(1)$ $T$ is any tree on $k+1$ vertices;\\
$(2)$ $T=R \circ k$ for some tree $R$  on $n\geq1$ vertices, where $R \circ k$ is the graph obtained by taking one copy of $R$ and $|V(R)|$ copies of the path $P_{k-1}$ of length $k-1$ and then joining the $ith$ vertex of $R$ to exactly one end vertex in the $ith$ copy of $P_{k-1}$.
\end{lemma}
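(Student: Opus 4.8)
The plan is to prove both implications, with sufficiency resting on a packing argument and necessity on induction on $n$. For sufficiency, condition $(1)$ is immediate: a tree on $k+1$ vertices has diameter at most $k$, so any single vertex distance $k$-dominates it and $\gamma_k=1=n$. For condition $(2)$, write $V(R)=\{r_1,\dots,r_n\}$, let $P^{(i)}$ be the pendant path attached at $r_i$, and let $w_i$ be its far endpoint, which sits at distance exactly $k$ from $r_i$. Then $V(R)$ is a distance $k$-dominating set, since every path vertex is within distance $k$ of its core, so $\gamma_k\le n$. For the reverse I would observe that the closed balls $B(w_i,k)=\{u:d_T(u,w_i)\le k\}$ are pairwise disjoint: any path from $w_i$ to a vertex outside $\{r_i\}\cup V(P^{(i)})$ must pass through $r_i$ (already at distance $k$) and continue, so that vertex is at distance $>k$; hence $B(w_i,k)=\{r_i\}\cup V(P^{(i)})$. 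Since any distance $k$-dominating set must meet each $B(w_i,k)$, disjointness forces $\gamma_k\ge n$, and equality follows.

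For necessity I would induct on $n$. The case $n=1$ is exactly condition $(1)$. When $n=2$ the tree has order $2(k+1)$ and $\gamma_k=2\ge 2$, so Lemma \ref{l25} gives $\Delta\le 2(k+1)-2k=2$; thus $T$ is a path on $2(k+1)$ vertices, which is $R\circ k$ with $|V(R)|=2$. This small case is essential, because the inductive step will reduce to a tree whose parameter is at least $2$.

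For $n\ge 3$ I would take a longest path $v_0v_1\cdots v_d$ with $v_0$ a leaf (note $d\ge k+1$, since otherwise a central vertex would give $\gamma_k=1$). Let $B$ be the component of $T-v_kv_{k+1}$ containing $v_k$. Maximality of the path forces $B$ to have depth at most $k$, so $v_k$ distance $k$-dominates $B$ and $B\supseteq\{v_0,\dots,v_k\}$. I claim $|B|=k+1$: if $|B|\ge k+2$ then $T''=T-B$ has at most $(k+1)(n-1)-1$ vertices, and either $|V(T'')|\ge k+1$, so Lemma \ref{l24} gives $\gamma_k(T'')\le n-2$, or $|V(T'')|\le k$, so $\gamma_k(T'')=1$; in either case adjoining $v_k$ to a minimum distance $k$-dominating set of $T''$ would distance $k$-dominate $T$ with fewer than $n$ vertices, contradicting $\gamma_k(T)=n$. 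Hence $B=\{v_0,\dots,v_k\}$ is a pendant path on $k+1$ vertices. Putting $T'=T-B$, a tree on $(k+1)(n-1)$ vertices, Lemma \ref{l24} gives $\gamma_k(T')\le n-1$, while $\gamma_k(T')\le n-2$ would let $v_k$ upgrade a dominating set of $T'$ to one of $T$ of size $n-1$; thus $\gamma_k(T')=n-1$ and, by induction, $T'=R'\circ k$.

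The main obstacle is the final structural step: showing the attachment vertex $v_{k+1}$ is a \emph{core} vertex of $R'$, i.e.\ $v_{k+1}\in V(R')$, for only then does adjoining the pendant path $B$ yield an instance $R\circ k$ with $R=R'+v_k$ and edge $v_kv_{k+1}$. Suppose instead $v_{k+1}$ were an internal vertex of the pendant path of some core $r_m$, at distance $j\ge 1$ from $r_m$. I would then build a distance $k$-dominating set of size $n-1$ by replacing $r_m$ with the balanced vertex $x=v_{k-j+1}\in B$: a direct distance check shows $x$ covers all of $B$ and the entire pendant path of $r_m$ up to its far end, while $r_m$ and its first $k-1$ path vertices are covered by a neighbouring core, which exists because $R'$ is a tree on $n-1\ge 2$ vertices. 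This produces $\gamma_k(T)\le n-1$, contradicting $\gamma_k(T)=n$. Hence $v_{k+1}\in V(R')$ and $T=R\circ k$, closing the induction. This is the step where extremality of $\gamma_k$ is genuinely used, through the balancing choice of $x$, and I expect it to require the most careful bookkeeping.
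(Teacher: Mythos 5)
This lemma is quoted in the paper as a known result of Topp and Volkmann \cite{6} with no proof given, so there is no internal argument to compare against; what matters is whether your self-contained proof stands on its own, and after checking it does. Your sufficiency argument is sound: the balls $B(w_i,k)$ around the far endpoints of the pendant paths are exactly the sets $\{r_i\}\cup V(P^{(i)})$, they partition $V(T)$ into $n$ blocks each of which must meet any distance $k$-dominating set, giving $\gamma_k\ge n$, while $V(R)$ gives $\gamma_k\le n$. Your necessity induction also checks out at every delicate point: the depth-at-most-$k$ claim for the branch $B$ follows correctly from maximality of the path (for $u\in B$ merging into the path at $v_j$, $j\le k$, one has $d(u,v_j)\le j$, hence $d(u,v_k)\le k$); the counting via Lemma \ref{l24} that forces $|B|=k+1$ and $\gamma_k(T')=n-1$ is right, including the side case $|V(T'')|\le k$; and the final exchange argument is the real content and works: with $v_{k+1}=u_j$ at distance $j\ge1$ from the core $r_m$, the set $(V(R')\setminus\{r_m\})\cup\{v_{k-j+1}\}$ covers $B$ (farthest vertex $v_0$ at distance $k-j+1\le k$), covers $u_j,\dots,u_k$ (with $d(x,u_k)=k$ exactly), and a neighbouring core of $r_m$ — which exists precisely because $n-1\ge2$ — picks up $r_m,u_1,\dots,u_{k-1}$, yielding the contradiction $\gamma_k(T)\le n-1$. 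You were also right to isolate $n=2$ as a separate base case: for $n-1=1$ the induction only returns ``any tree on $k+1$ vertices,'' which is not of the form $R'\circ k$, so the exchange step would be unavailable. Your route differs from the original source, which works in the Meir--Moon framework of $k$-packings and $k$-coverings of trees (characterizing equality in $\gamma_k=\rho_k$), whereas yours is a direct longest-path induction; the one stylistic caveat is that your $n=2$ base leans on Lemma \ref{l25}, a result from the later paper \cite{5}, so if you wanted a proof independent of that literature you would need to replace the appeal to $\Delta\le n-k\gamma_k$ by a short direct argument that a tree on $2k+2$ vertices with $\gamma_k=2$ must be the path; this is a dependency choice, not a gap.
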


\begin{lemma} \label{add}
  Let $T$ be a tree of order $n$. Let
  \[
  f(T)=\prod_{w\in V(T)}(d_T(w)+1).
  \]
Then $f(T)\ge  2^{n-1}n$ with equality if and only if $T\cong S_n$.
\end{lemma}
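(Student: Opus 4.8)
The plan is to prove the bound by induction on $n$, deleting a single pendent vertex at each step. (An alternative I would mention is to relax the degree sequence to real variables and minimize $\prod_w(d_T(w)+1)$ subject to $\sum_w d_T(w)=2(n-1)$ with each $d_T(w)\ge 1$; a standard smoothing/AM--GM argument forces all but one coordinate to the boundary value $1$, which is exactly the degree sequence of $S_n$. But the induction is cleaner, especially for the equality case, so I would present that.)

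For the base case $n=1$ (and $n=2$) I would simply verify $f(T)=2^{n-1}n$ by direct computation, noting that the only tree is $S_n$. For the inductive step I assume the result for all trees of order $n-1$, take $T$ of order $n\ge 2$, choose a pendent vertex $u$ with neighbor $v$, write $d=d_T(v)$, and set $T'=T-u$, a tree of order $n-1$. The point is that every vertex other than $u$ and $v$ keeps its degree, while $d_{T'}(v)=d-1$ and $u$ is removed, so comparing the two products term by term gives the exact multiplicative relation
\[
f(T)=2\cdot\frac{d+1}{d}\,f(T'),
\]
where the factor $2$ is $d_T(u)+1$ and the factor $(d+1)/d$ records the change at $v$. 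Establishing this clean ratio is the technical heart of the argument.

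From here the inequality is routine: applying the induction hypothesis $f(T')\ge 2^{n-2}(n-1)$ yields $f(T)\ge 2^{n-1}(n-1)\frac{d+1}{d}$, so it remains to check $(n-1)\frac{d+1}{d}\ge n$. This rearranges to $n-d-1\ge 0$, i.e.\ $d\le n-1$, which holds automatically since any vertex of a tree on $n$ vertices has degree at most $n-1$. Hence $f(T)\ge 2^{n-1}n$.

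For the equality characterization I would observe that $(n-1)\frac{d+1}{d}\ge n$ is strict unless $d=n-1$, so equality in $f(T)\ge 2^{n-1}n$ forces $d_T(v)=n-1$; but then $v$ is adjacent to every other vertex, which already accounts for all $n-1$ edges, so $T=S_n$. The converse is the direct computation $f(S_n)=(n-1+1)\cdot 2^{\,n-1}=n\cdot 2^{n-1}$. The main obstacle here is essentially bookkeeping: setting up the exact ratio $f(T)/f(T')$ correctly and then using it to pin down the equality case. Once the ratio is in hand, the governing inequality $d\le n-1$ is trivial, and equality cleanly singles out the star.
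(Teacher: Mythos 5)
Your proposal is correct and follows essentially the same route as the paper: induction on $n$ via deletion of a pendent vertex $u$ with neighbor $v$, the multiplicative relation $f(T)=2\cdot\frac{d+1}{d}f(T-u)$ with $d=d_T(v)$, and the elementary inequality $(n-1)\frac{d+1}{d}\ge n$ equivalent to $d\le n-1$. Your equality analysis is in fact slightly cleaner than the paper's, since you note that $d=n-1$ alone already forces $T\cong S_n$, whereas the paper also carries along the condition $T-u\cong S_{n-1}$ from the induction hypothesis.
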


\begin{proof} If $n=1$, it is obviously.
Suppose that $n\ge 2$ and that the result is true for a tree of order $n-1$. Let $u$ be a pendent vertex of $T$, being adjacent to vertex $v$. By induction assumption,
\[
f(T-u)\ge 2^{n-2}(n-1)
\]
i.e.,
\[
d_T(v)\prod_{w\in V(T)\setminus\{u,v\}}(d_T(w)+1) \ge 2^{n-2}(n-1)
\]
with equality if and only if $T-u\cong S_{n-1}$. Now we have
\begin{eqnarray*}
f(T)&=&(d_T(u)+1) (d_T(v)+1) \prod_{w\in V(T)\setminus\{u,v\}}(d_T(w)+1)\\
  &=& 2(d_T(v)+1)\prod_{w\in V(T)\setminus\{u,v\}}(d_T(w)+1)\\
  &\ge & 2(d_T(v)+1)\cdot\frac{2^{n-2}(n-1)}{d_T(v)}\\
  &\ge & 2^{n-1}n.
\end{eqnarray*}
with equalities if and only if $d_T(v)=n-1$ and $T-u\cong S_{n-1}$, i.e., $T\cong S_n$.
\end{proof}

\begin{lemma} \label{addd}
  Let $T$ be a tree of order $n$. Let
  \[
  h(T)=\prod_{w\in V(T)}(d_T(w)+1)^{d_T(w)+1}.
  \]
Then $h(T)\le  4^{n-1}n^n$ with equality if and only if $T\cong S_n$.
\end{lemma}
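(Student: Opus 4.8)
The plan is to mirror the inductive argument of Lemma~\ref{add}, replacing the bound for $f$ by the analogous estimate for $h$, and inducting on the order $n$. For the base case $n=1$ one has $h(T)=1=4^{0}\cdot 1^{1}$, with equality. So assume the bound holds for all trees of order $n-1$, where $n\ge 2$.

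For the inductive step I would pick a pendent vertex $u$ with unique neighbour $v$, and apply the induction hypothesis to $T-u$, a tree of order $n-1$. Only the degree of $v$ changes upon deleting $u$, namely $d_{T-u}(v)=d_T(v)-1$, while every other vertex keeps its degree. Writing $d=d_T(v)$ and isolating the contribution of $u$ (which is pendent, so contributes the factor $2^2=4$) together with that of $v$, I would record the identity
\[
h(T)=4\cdot\frac{(d+1)^{d+1}}{d^{d}}\cdot h(T-u),
\]
so that the induction hypothesis $h(T-u)\le 4^{n-2}(n-1)^{n-1}$ yields
\[
h(T)\le 4^{n-1}\cdot\frac{(d+1)^{d+1}}{d^{d}}\cdot (n-1)^{n-1}.
\]

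It then suffices to control the factor $g(d):=(d+1)^{d+1}/d^{d}$. Since $v$ is adjacent to $u$ and $T$ has $n$ vertices, we have $1\le d\le n-1$. The key step, and essentially the only content beyond this degree bookkeeping, is to show that $g$ is strictly increasing on the positive integers, so that $g(d)\le g(n-1)=n^{n}/(n-1)^{n-1}$. Writing $g(d)=(d+1)\left(1+\tfrac1d\right)^{d}$ exhibits $g$ as a product of two increasing factors; alternatively, $\log\!\big(g(d{+}1)/g(d)\big)\ge 0$ follows from the convexity of $t\mapsto t\log t$ applied to the three points $d,\,d+1,\,d+2$. Substituting $g(d)\le g(n-1)$ into the displayed bound produces exactly $h(T)\le 4^{n-1}n^{n}$.

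For the equality discussion, note that the chain forces equality in two places: in the induction hypothesis, giving $T-u\cong S_{n-1}$, and in $g(d)\le g(n-1)$, which by strict monotonicity forces $d_T(v)=n-1$. The latter already says $v$ is adjacent to every other vertex, i.e. $T\cong S_{n}$; conversely, deleting any leaf of $S_{n}$ leaves $S_{n-1}$ and the central degree is $n-1$, so equality indeed holds for the star. I expect the monotonicity of $g$ to be the main (and essentially the only) obstacle, the remainder being the degree-tracking identity already present in Lemma~\ref{add}.
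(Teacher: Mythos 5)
Your proposal is correct and follows essentially the same argument as the paper: induction on $n$ by deleting a pendent vertex $u$, applying the hypothesis to $T-u$, and absorbing the ratio $(d+1)^{d+1}/d^{d}$ at the neighbour $v$. The only difference is that you explicitly state and prove the monotonicity of $g(d)=(d+1)^{d+1}/d^{d}$ (which the paper leaves implicit in its final inequality, though it proves the analogous fact for $x^{x}/(x-1)^{x-1}$ later in Lemma~\ref{l32}), so your write-up is if anything slightly more complete.
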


\begin{proof} If $n=1$, it is obviously.
Suppose that $n\ge 2$ and that the result is true for a tree of order $n-1$. Let $u$ be a pendent vertex of $T$, being adjacent to vertex $v$. By induction assumption,
\[
h(T-u)\le 4^{n-2}(n-1)^{n-1}
\]
i.e.,
\[
(d_T(v))^{d_T(v)}\prod_{w\in V(T)\setminus\{u,v\}}(d_T(w)+1)^{d_T(w)+1}\le 4^{n-2}(n-1)^{n-1}
\]
with equality if and only if $T-u\cong S_{n-1}$. Now we have
\begin{eqnarray*}
h(T)&=&(d_T(u)+1)^{d_T(u)+1} \cdot (d_T(v)+1)^{d_T(u)+1}\cdot \left(\prod_{w\in V(T)\setminus\{u,v\}}(d_T(w)+1)^{d_T(w)+1}\right)\\
  &=& 4(d_T(v)+1)^{d_T(v)+1}\cdot \prod_{w\in V(T)\setminus\{u,v\}}(d_T(w)+1)^{d_T(w)+1}\\
  &\le & 4(d_T(v)+1)^{d_T(v)+1}\cdot \frac{4^{n-2}(n-1)^{n-1}}{(d_T(v))^{d_T(v)}}\\
  &\le & 4^{n-1}n^n.
\end{eqnarray*}
with equalities if and only if $d_T(v)=n-1$ and $T-u\cong S_{n-1}$, i.e., $T\cong S_n$.
\end{proof}

For a graph $G$ with $S\subset V(G)$, let $N_G(S)=\cup_{v\in S} N_G(v)$.

\begin{lemma} \label{129} Let T be a tree with minimum value of first multiplicative Zagreb index or maximum value of second multiplicative zagreb index among all $n$-vertex trees with distance $k$-domination number $\gamma_k$. Let
\[
 B_T=\{x \in V(T)| d_T(w)=1 \mbox{ and } \gamma_k(T-w)=\gamma_k(T)\}.
 \]
 If $B_T\neq \emptyset$, then $|N_T(B_T)|=1$.

\end{lemma}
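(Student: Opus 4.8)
The plan is to argue by contradiction. Suppose $|N_T(B_T)|\ge 2$, so there exist $x,y\in B_T$ with distinct neighbours $u=N_T(x)$ and $v=N_T(y)$. Since $T$ is connected and $u\neq v$, each of $u,v$ has a neighbour on the $u$--$v$ path in addition to its pendent neighbour, so $d_T(u),d_T(v)\ge 2$; relabelling if necessary I may assume $d_T(u)\ge d_T(v)$. Form the tree $T'=T-vy+uy$, obtained by detaching the pendent vertex $y$ from $v$ and re-attaching it to $u$. The whole proof reduces to two claims: (i) $\Pi_1(T')<\Pi_1(T)$ and $\Pi_2(T')>\Pi_2(T)$; and (ii) $\gamma_k(T')=\gamma_k(T)$. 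Together these contradict the extremality of $T$, since $T'$ then lies in the same family of $n$-vertex trees with distance $k$-domination number $\gamma_k$ but has strictly smaller $\Pi_1$ (resp. strictly larger $\Pi_2$).

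For claim (i) only the degrees of $u$ and $v$ change: $d_{T'}(u)=d_T(u)+1$ and $d_{T'}(v)=d_T(v)-1\ge 1$. Hence $\Pi_1(T')/\Pi_1(T)=\bigl((d_T(u)+1)(d_T(v)-1)/(d_T(u)d_T(v))\bigr)^2$, and since $d_T(u)\ge d_T(v)$ the bracketed numerator minus denominator equals $d_T(v)-d_T(u)-1<0$, giving $\Pi_1(T')<\Pi_1(T)$. For $\Pi_2$ I use $\Pi_2=\prod_w d(w)^{d(w)}$ together with the strict convexity of $t\mapsto t\log t$: shifting one unit of degree from the smaller value $d_T(v)$ to the larger value $d_T(u)$ strictly increases $\sum_w d(w)\log d(w)$, so $\Pi_2(T')>\Pi_2(T)$. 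This is the same monotonicity phenomenon that underlies Lemma \ref{l23}, here specialised to moving a single pendent vertex towards the higher-degree endpoint.

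The substance of the lemma is claim (ii), and here both defining properties of $B_T$ are needed. For $\gamma_k(T')\le\gamma_k(T)$, take a minimum distance $k$-dominating set $D$ of $T$; replacing any pendent vertex lying in $D$ by its neighbour, I may assume $x\notin D$. As $x$ is a pendent neighbour of $u$, it is dominated by some $w\in D$ with $d_T(w,x)\le k$, whence $d_T(w,u)\le k-1$. Re-attaching $y$ leaves all distances among $V(T)\setminus\{y\}$ unchanged, so $D$ still dominates every vertex other than $y$ in $T'$, while $d_{T'}(w,y)=d_{T'}(w,u)+1\le k$ dominates $y$; thus $D$ is a distance $k$-dominating set of $T'$. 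For the reverse inequality I use the auxiliary fact that attaching a pendent vertex to any graph $H$ cannot decrease $\gamma_k$ (given a minimum distance $k$-dominating set of the enlarged graph, one may assume the new vertex is not chosen, and it then restricts to a distance $k$-dominating set of $H$). Since $T'-y=T-y$ as graphs and $y\in B_T$ gives $\gamma_k(T-y)=\gamma_k(T)$, this yields $\gamma_k(T')\ge\gamma_k(T'-y)=\gamma_k(T-y)=\gamma_k(T)$. Combining the two bounds gives $\gamma_k(T')=\gamma_k(T)$.

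The main difficulty is precisely this two-sided control of $\gamma_k$ under the pendent shift: one must ensure that moving $y$ neither creates an undominated vertex — handled by the presence of the second pendent vertex $x$ at $u$, which certifies a dominator within distance $k-1$ of $u$ — nor permits a cheaper dominating set, handled by $y\in B_T$ together with the monotonicity of $\gamma_k$ under pendent attachment. Once claim (ii) is established, $T'$ contradicts the extremality of $T$, so $|N_T(B_T)|\ge 2$ is impossible; as $B_T\neq\emptyset$ forces $|N_T(B_T)|\ge 1$, we conclude $|N_T(B_T)|=1$.
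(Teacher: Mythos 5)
Your proof is correct and follows the same core strategy as the paper's: shift a pendent $B_T$-vertex from one of the two attachment vertices to the other, check that the distance $k$-domination number is unchanged, and contradict the extremality of $T$. The differences are in execution, and they mostly favour you. The paper performs the shift in both directions ($T'$ and $T''$) and invokes Lemma \ref{l23} for the index inequalities, whereas you fix a single direction by assuming $d_T(u)\ge d_T(v)$ and verify $\Pi_1(T')<\Pi_1(T)$ and $\Pi_2(T')>\Pi_2(T)$ directly, the latter via strict convexity of $t\mapsto t\log t$; both routes are valid, yours being self-contained. More substantively, the paper merely asserts $\gamma_k(T)=\gamma_k(T')=\gamma_k(T'')$ after normalizing $D$ so that no pendent neighbour of $u$ or $v$ lies in $D$ --- that normalization yields only $\gamma_k(T')\le\gamma_k(T)$, and the reverse inequality is never argued there; your appeal to $y\in B_T$ (so $\gamma_k(T-y)=\gamma_k(T)$) combined with the monotonicity of $\gamma_k$ under attaching a pendent vertex supplies exactly this missing half, and it is the one place where the defining property of $B_T$ genuinely enters. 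One small point to tighten: for your claim that ``$D$ still dominates every vertex other than $y$ in $T'$'' you need $y\notin D$, not only $x\notin D$ (if $y\in D$, vertices near $v$ that were dominated only by $y$ could become undominated after the move); this follows from the same pendant-replacement normalization you already invoke --- replace \emph{every} pendent vertex of $D$ by its neighbour --- so you should state it explicitly.
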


\begin{proof} Suppose that $|N_T(B_T)|\geq2$, say   $u$, $v\in N_T(B_T)$.  If $u'\notin D$ for some pendent neighbor $u'$ of $u$, then $D\setminus\{u'\}\cup \{u\}$ is a distance $k$-dominating set of $T$. So we may assume that no pendent neighbor of $u$ and $v$ is in $D$. Define $T^\prime=T-vv'+uv'$ and $T^{\prime\prime}=T-uu'+vu'$, where $u'$ ($v'$, respectively) is a pendent neighbor of $u$ ($v$, respectively). Then $\gamma_k(T)=\gamma_k(T^\prime)=\gamma_k(T^{\prime\prime})$.
By Lemma \ref{l23},\[
\max\{\Pi_1(T^\prime), \Pi_1(T^{\prime\prime})\}<\Pi_1(T)
\]
and
\[
\min\{\Pi_1(T^\prime), \Pi_1(T^{\prime\prime})\}>\Pi_2(T).
\]
  a contradiction. Hence  $|N_T(B_T)|=1$.
\end{proof}

\section{Main results}

In this section, we present sharp lower bounds of first multiplicative Zagreb index and upper bounds for second multiplicative zagreb index of a tree of order $n$ with distance $k$-domination number $\gamma_k$.

 A tree is starlike if it contains at most one vertex of degree at least three. Obviously, a starlike tree is either a path or a tree with exactly one vertex of degree at least three. In the latter case, it consists of pendent paths at common vertex.

\begin{defn}
For positive integers $n$, $k$ and $s$ with $n\ge (k+1)s$, define $T_{n,k,s}$ to be  a starlike tree with maximum degree $n-ks$, and if it is not a path, then it has  one pendent path of length $k-1$, $s-1$ pendent paths of length $k$ and $n-(k+1)s$ paths of length $1$.
%
%
%the tree formed from the star $K_{1,n-ks}$ by attaching $s$ paths including a path $P_{k-1}$ and $s-1$ path $P_k$ to
%to a pendent vertex and attaching a path $P_k$  to $\gamma_ k-1 $  pendent vertices.
\end{defn}

%Let $\bar{T}_{n,k,\gamma_ k}$ be the class of all $n$-vertex trees with distance $k$-domination number $\gamma_k$. For any tree of order $n$ with distance $k$-domination number $\gamma_k$ if $\Delta=n-k\gamma_k$ then $T\cong  T_{n,k,\gamma _k}$. Then the first and second multiplicative Zagreb indices  can be calculated as,

Note that
\[
\Pi_1(T_{n,k,s})=4^{ks-1}(n-ks)^2  \mbox{ and }
\Pi_2(T_{n,k,s})=4^{ks-1}(n-ks)^{n-ks}.
\]

As mentioned earlier, sharp lower bounds on first multiplicative Zagreb index and  upper bounds on the second multiplicative Zagreb index of an $n$-vertex tree with distance $1$-domination number have been given in \cite{1}, so we only consider $k\geq 2$.

\begin{defn}
 If $P=v_0v_1\dots v_d$ is a diametric path of tree $T$ of order $n$, then denote by $T_i$ the component of $T-v_{i-1}v_i-v_iv_{i+1}$ containing $v_i$ for $i=1,\dots,d-1$.
\end{defn}

\begin{defn}
    Denoted by $T^a_{n,k,2}$ the tree formed from the path $P_{2k+2}=v_0v_1\dots v_{2k+1}$ by joining $n-2(k+1)$ pendent vertices to $v_a$, where $a\in\{1,\dots, k\}$.
\end{defn}

For a graph $G$, it is obvious that $\gamma_k(G)\le \gamma_1(G)$ for $k\ge 2$. Note also that $\gamma_1(S_n)=1$. Thus, by Lemma \ref{l21}, we have the following result.

 \begin{theorem}
   Let $T$ be an $n$-vertex tree and $\gamma_k(T)=1$. Then $\Pi_1(T)\geq n^2$ and $\Pi_2(T)\leq n^n$. Either equality holds if and only if $T\cong S_n$.
 \end{theorem}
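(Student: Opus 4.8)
The plan is to recognize that the hypothesis $\gamma_k(T)=1$ singles out a subfamily of the set of all $n$-vertex trees, and that the star $S_n$ is already the extremizer over the \emph{entire} set by Lemma \ref{l21}. Once we check that $S_n$ itself lies in this subfamily, the theorem follows by the trivial principle that a global extremizer belonging to a subset is also the extremizer over that subset. To place $S_n$ in the subfamily, I would use the stated monotonicity $\gamma_k(S_n)\le\gamma_1(S_n)$ for $k\ge2$ together with $\gamma_1(S_n)=1$ and $\gamma_k(S_n)\ge1$, which forces $\gamma_k(S_n)=1$.

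For the global extremality, Lemma \ref{l21} says that for $n>5$ every tree $T\ncong S_n,P_n$ satisfies $\Pi_1(S_n)<\Pi_1(T)$ and $\Pi_2(T)<\Pi_2(S_n)$. The only tree left on the boundary is the path, so I would treat it separately: for $n>5$ there exists an intermediate tree $T_0\ncong S_n,P_n$, and the chain $\Pi_1(S_n)<\Pi_1(T_0)<\Pi_1(P_n)$ and $\Pi_2(P_n)<\Pi_2(T_0)<\Pi_2(S_n)$ from Lemma \ref{l21} yields $\Pi_1(S_n)<\Pi_1(P_n)$ and $\Pi_2(P_n)<\Pi_2(S_n)$; alternatively this is immediate from $\Pi_1(P_n)=\Pi_2(P_n)=4^{n-2}$. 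Hence for $n>5$ the star $S_n$ is the \emph{unique} minimizer of $\Pi_1$ and the unique maximizer of $\Pi_2$ among all $n$-vertex trees, and since the $\gamma_k=1$ family is a subset containing $S_n$, the same extremality and uniqueness persist inside it. Evaluating $\Pi_1(S_n)$ and $\Pi_2(S_n)$ then produces the claimed bounds.

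It remains to dispose of the small orders $n\le5$, to which Lemma \ref{l21} does not apply. Here I would argue by direct inspection: when $k\ge2$ every tree on at most five vertices has radius at most $2\le k$, so $\gamma_k=1$ holds for all of them and the subfamily is simply the set of all such trees. For $n\le3$ the star and the path coincide and there is a unique tree, while for $n=4,5$ a short finite computation over the two or three isomorphism types confirms that $S_n$ strictly minimizes $\Pi_1$ and strictly maximizes $\Pi_2$. Combining the cases gives both inequalities and the equality characterization.

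I expect the substance of the proof to rest entirely on Lemma \ref{l21}, so the argument is short; the only genuine care is bookkeeping at the boundary. Specifically, the path $P_n$ is the one tree Lemma \ref{l21} leaves unresolved and, for small $n$, it can itself satisfy $\gamma_k=1$, so I must verify it is strictly dominated by $S_n$; and the orders $n\le5$ fall outside the range of Lemma \ref{l21} and must be handled by hand. Both are routine finite checks rather than a real obstacle.
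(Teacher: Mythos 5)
Your proposal is correct and follows essentially the same route as the paper: the paper's entire ``proof'' is the remark immediately preceding the theorem, namely that $\gamma_k(S_n)\le\gamma_1(S_n)=1$ places the star in the family and Lemma \ref{l21} then gives its global extremality, and your additional bookkeeping for the excluded path $P_n$ (via $\Pi_1(P_n)=\Pi_2(P_n)=4^{n-2}$) and for the orders $n\le 5$ merely fills in boundary checks the paper silently skips. One caveat: direct evaluation gives $\Pi_1(S_n)=(n-1)^2$ and $\Pi_2(S_n)=(n-1)^{n-1}$, not $n^2$ and $n^n$, so the constants printed in the theorem appear to be a typo in the paper, and your closing assertion that evaluating at $S_n$ ``produces the claimed bounds'' should instead note that your (and the paper's) argument actually proves $\Pi_1(T)\ge (n-1)^2$ and $\Pi_2(T)\le (n-1)^{n-1}$, with either equality exactly for $T\cong S_n$.
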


 \begin{theorem}
 Let  $T$ be a tree of order $n$  with  $\gamma_k(T)=2$. Then
\[
\Pi_1(T)\geq 4^{2k-1}(n-2k)^2 \mbox{ and }
\Pi_2(T)\leq 4^{2k-1}(n-2k)^{n-2k}.
    \]
Either equality holds if and only if $T\cong T^a_{n,k,2}$ with $a\in\{1,\dots,k\}$.
 \end{theorem}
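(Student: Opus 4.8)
The plan is to prove both inequalities at once by reducing each to an optimization over the degree sequence, and then to extract the extremal trees from the equality analysis. Write $q$ for the number of non-pendent vertices of $T$ and let $d_1,\dots,d_q\ge 2$ be their degrees. Since $T$ is a tree, $\sum_{i=1}^q d_i = 2(n-1)-(n-q)=n+q-2$, and pendent vertices contribute a factor $1$ to both indices, so $\Pi_1(T)=\prod_{i=1}^q d_i^{\,2}$ and $\Pi_2(T)=\prod_{i=1}^q d_i^{\,d_i}$. I would first record two constraints. Because $\gamma_k(T)=2\neq 1$, no single vertex distance-$k$-dominates $T$, so the radius of $T$ exceeds $k$; as $T$ is a tree this forces $\mathrm{diam}(T)\ge 2k+1$, and the $\ge 2k$ interior vertices of a diametric path are non-pendent, giving $q\ge 2k$. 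Lemma~\ref{l25} gives $\Delta(T)\le n-2k$, hence $d_i\le n-2k$ for all $i$, and Lemma~\ref{l24} forces $n\ge 2k+2$.

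For the analytic core I would prove, by induction on $q$ and using $(a-2)(b-2)\ge 0$ for $a,b\ge 2$, the inequality
\[
\prod_{i=1}^q d_i \ \ge\ 2^{\,q-1}\Big(\textstyle\sum_i d_i-2(q-1)\Big)=2^{\,q-1}(n-q),
\]
and, from the companion estimate $A^AB^B\le 4\,(A+B-2)^{A+B-2}$ for $A,B\ge 2$ (the convexity fact underlying Lemma~\ref{addd}), the dual inequality $\prod_{i=1}^q d_i^{\,d_i}\le 4^{\,q-1}(n-q)^{\,n-q}$; in each case equality holds iff all but one of the $d_i$ equal $2$. Squaring the first gives $\Pi_1(T)\ge 4^{q-1}(n-q)^2$ and the second gives $\Pi_2(T)\le 4^{q-1}(n-q)^{n-q}$. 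It then remains to optimize over $q\in\{2k,\dots,n-2\}$: a one-line ratio check using $n-q\ge 2$ shows $q\mapsto 4^{q-1}(n-q)^2$ is strictly increasing and $q\mapsto 4^{q-1}(n-q)^{n-q}$ strictly decreasing on this range, so both are extremized at $q=2k$, producing exactly $4^{2k-1}(n-2k)^2$ and $4^{2k-1}(n-2k)^{n-2k}$.

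For the equality case, attaining either bound forces both the analytic inequality and the monotonicity in $q$ to be tight, hence $q=2k$ and the degree multiset $(n-2k,2,\dots,2)$: one vertex of degree $n-2k$, exactly $2k-1$ of degree $2$, and $n-2k$ pendents. A tree with a single vertex of degree $\ge 3$ and all other interior vertices of degree $2$ is starlike, so $T$ is a spider whose branch lengths $\ell_1\ge\ell_2\ge\cdots$ satisfy $\sum_i(\ell_i-1)=2k-1$. The decisive and, I expect, most delicate step is to translate $\gamma_k(T)=2$ into a condition on these lengths. Since $\mathrm{diam}(T)=\ell_1+\ell_2$ and the radius of a tree equals $\lceil \mathrm{diam}/2\rceil$, the condition $\gamma_k\ge 2$ (radius $>k$) is \emph{not} the naive ``$\ell_1\ge k+1$'' but the sharper $\ell_1+\ell_2\ge 2k+1$; getting this right is what pins the family down to precisely the $T^a_{n,k,2}$ rather than a wider set.

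Combining $\ell_1+\ell_2\ge 2k+1$ with $(\ell_1-1)+(\ell_2-1)\le\sum_i(\ell_i-1)=2k-1$ forces $\ell_1+\ell_2=2k+1$ and $\ell_i=1$ for every $i\ge 3$; writing $a=\ell_2\in\{1,\dots,k\}$ identifies $T$ with $T^a_{n,k,2}$. Conversely, each $T^a_{n,k,2}$ has radius $\lceil(2k+1)/2\rceil=k+1>k$ and admits the distance-$k$-dominating set $\{v_a,v_{k+1}\}$, so $\gamma_k=2$, and its degree multiset is exactly $(n-2k,2^{2k-1},1^{n-2k})$, realizing both bounds by the computation recorded after the definition of $T_{n,k,s}$. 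This completes the characterization.
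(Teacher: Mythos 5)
Your proof is correct, but it takes a genuinely different route from the paper's. The paper works with a hypothetical extremal tree and a diametric path $P=v_0\dots v_d$: it first pins the diameter to $d=2k+1$ (if $d\le 2k$ a single central vertex distance-$k$-dominates; if $d\ge 2k+2$ the contraction-plus-pendent operation of Lemma~\ref{l22} strictly improves both indices while keeping $\gamma_k=2$), then forces each component $T_i$ to be a star centered at $v_i$ (Lemma~\ref{l22} again), and finally consolidates all pendent edges onto a single vertex $v_a$ via Lemma~\ref{l23}, landing directly on $T^a_{n,k,2}$. You instead optimize over degree sequences: writing $q$ for the number of internal vertices, so $\sum_i d_i=n+q-2$, the merging inequalities $ab\ge 2(a+b-2)$ and $A^AB^B\le 4(A+B-2)^{A+B-2}$ (for arguments $\ge 2$, equality iff one argument equals $2$ --- these are in effect degree-sequence versions of the paper's Lemmas~\ref{add} and~\ref{addd}) give $\Pi_1(T)\ge 4^{q-1}(n-q)^2$ and $\Pi_2(T)\le 4^{q-1}(n-q)^{n-q}$, and strict monotonicity in $q$ over $\{2k,\dots,n-2\}$, with $q\ge 2k$ extracted from $\gamma_k\ge 2\Rightarrow \mathrm{radius}\ge k+1\Rightarrow \mathrm{diam}\ge 2k+1$, yields the stated bounds at $q=2k$. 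Your equality analysis is the delicate part and you handle it correctly: the degree multiset $(n-2k,2^{2k-1},1^{n-2k})$ forces a spider (or the path when $n=2k+2$), and playing $\ell_1+\ell_2\ge 2k+1$ (radius $>k$, correctly sharper than the naive single-leg condition) against $\sum_i(\ell_i-1)=2k-1$ recovers exactly the family $T^a_{n,k,2}$, $a\in\{1,\dots,k\}$, with the converse verified directly. What your route buys: explicit closed-form intermediate bounds for every $q$, no extremality assumption, and no need to check that the transformations of Lemmas~\ref{l22} and~\ref{l23} preserve $\gamma_k$ (a point the paper passes over quickly); what the paper's route buys: structural machinery that transfers to the general case $\gamma_k\ge 3$, where the degree sequence alone no longer determines $\gamma_k$ and your method would need an additional lemma of the form $q\ge k\gamma_k$ plus a leg-placement analysis. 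Two cosmetic remarks: Lemma~\ref{l25} is invoked but never actually used in your argument; and in your ratio check the consecutive ratio $4(m-1)^2/m^2$ (resp.\ $4(m-1)^{m-1}/m^m$) with $m=n-q$ equals $1$ at $m=2$, so strictness over $\{2k,\dots,n-2\}$ rests on every step starting at $m\ge 3$ --- true here, but worth saying explicitly.
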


\begin{proof}
Let $T$ be a tree  of order $n$ with distance $k$-domination number $2$ that minimize the first multiplicative Zagreb index and maximize the second multiplicative Zagreb index respectively.

Let  $P=v_0\dots v_d$ be a diametric path of $T$.  If $d\leq 2k$, then $\{v_{\lfloor \frac {d}{2} \rfloor}\}$ is a distance $k$-dominating set of $T$, a contradiction. If $d\geq 2k+2$, for $i\in\{1,\dots,d-2\}$, $T_{v_iv_{i+1}}$ is a tree of order $n$ with distance $k$-domination number $2$, by Lemma \ref{l22}, we have  $\Pi_1(T_{v_iv_{i+1}})<\Pi_1(T)$, and $\Pi_2(T_{v_iv_{i+1}})>\Pi_2(T)$, also a contradiction. Hence $d=2k+1$.

If $T_i$ is not a star for some  $i\in \{1,\dots,2k\}$,  then as above, $T_{v_iv_{i+1}}$ is a tree of order $n$ with distance $k$-domination number $2$ such that $\Pi_1(T_{v_iv_{i+1}})<\Pi_1(T)$, and $\Pi_2(T_{v_iv_{i+1}})>\Pi_2(T)$, a contradiction. Thus each $T_i$ for $i\in \{1,\dots,d-1\}$ is a star with center $v_i$.
Now by Lemma \ref{l23}, for some $a\in \{1,\dots,k\}$, $T\cong T^a_{n,k,2}$.

By direct calculation , $\Pi_1(T)= \Pi_1(T^a_{n,k,2})= 4^{2k-1}(n-2k)^2$ and
  $\Pi_2(T)= \Pi_2(T^a_{n,k,2})= 4^{2k-1}(n-2k)^{n-2k}$ for $a\in \{1,\dots, k\}$.
\end{proof}

\begin{lemma} \label{l31}
  Let $T$ be a tree of order $n$ with distance $k$-domination number  $\gamma_k\geq 3$. If $n=(k+1)\gamma_k$, then
\[
\Pi_1(T)\geq 4^{k\gamma_k-1}(\gamma_k)^2 \mbox{ and }
\Pi_2(T)\leq 4^{k\gamma_k-1}(\gamma_k)^{\gamma_k}.
    \]
Either  equality holds if and only if $T\cong  T_{n,k,\gamma_k}$.
\end{lemma}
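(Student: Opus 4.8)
The plan is to reduce the statement to the structural classification in Lemma \ref{l26} and then invoke Lemmas \ref{add} and \ref{addd}, which are tailor-made for exactly this situation. Since $\gamma_k \ge 3 > 1$ and $n=(k+1)\gamma_k$, condition $(1)$ of Lemma \ref{l26} (which forces $n=k+1$, hence $\gamma_k=1$) cannot hold; therefore \emph{every} $n$-vertex tree $T$ with $\gamma_k(T)=\gamma_k$ must be of the form $T=R\circ k$ for some tree $R$ on $\gamma_k$ vertices, and conversely each such $R\circ k$ lies in our family. Thus minimizing $\Pi_1$ (respectively maximizing $\Pi_2$) over the family is the same as optimizing over the choice of the ``core'' tree $R$.

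The key observation is that in $T=R\circ k$ each vertex of $R$ is joined to exactly one end vertex of its attached pendent path, so $d_T(r)=d_R(r)+1$ for every $r\in V(R)$, while each of the $\gamma_k$ pendent paths contributes $k-1$ vertices of degree $2$ and one vertex of degree $1$ (here I use $k\ge 2$). Consequently the degree-$2$ and degree-$1$ vertices together contribute the fixed factor $4^{(k-1)\gamma_k}$ to both indices, and the contribution of the $R$-vertices is precisely the functions $f$ and $h$ of the preceding lemmas evaluated at $R$:
\[
\Pi_1(T)=4^{(k-1)\gamma_k}\prod_{r\in V(R)}(d_R(r)+1)^2=4^{(k-1)\gamma_k}\,f(R)^2,
\]
\[
\Pi_2(T)=4^{(k-1)\gamma_k}\prod_{r\in V(R)}(d_R(r)+1)^{d_R(r)+1}=4^{(k-1)\gamma_k}\,h(R).
\]

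Now I would apply Lemmas \ref{add} and \ref{addd} directly: $f(R)\ge 2^{\gamma_k-1}\gamma_k$ and $h(R)\le 4^{\gamma_k-1}\gamma_k^{\gamma_k}$, each with equality if and only if $R\cong S_{\gamma_k}$. Substituting and using $(k-1)\gamma_k+(\gamma_k-1)=k\gamma_k-1$ yields $\Pi_1(T)\ge 4^{(k-1)\gamma_k}\bigl(2^{\gamma_k-1}\gamma_k\bigr)^2=4^{k\gamma_k-1}\gamma_k^2$ and $\Pi_2(T)\le 4^{(k-1)\gamma_k}\cdot 4^{\gamma_k-1}\gamma_k^{\gamma_k}=4^{k\gamma_k-1}\gamma_k^{\gamma_k}$, as claimed. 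To finish the equality discussion I would identify the case $R\cong S_{\gamma_k}$ with $T_{n,k,\gamma_k}$: when $R$ is the star, its center attains degree $\gamma_k=n-k\gamma_k$ in $T$ and the resulting graph is exactly the starlike tree described in the definition of $T_{n,k,\gamma_k}$.

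The main point requiring care is the reduction step, namely verifying that the pendent-path vertices contribute the single fixed factor $4^{(k-1)\gamma_k}$ and that the degrees of the $R$-vertices shift by exactly $1$; once these two facts are pinned down, the bounds are immediate consequences of Lemmas \ref{add} and \ref{addd} and the exponent arithmetic. Everything else is routine, so I do not anticipate a serious obstacle beyond keeping the vertex and degree bookkeeping consistent with the (somewhat nonstandard) length conventions used in the definition of $T_{n,k,s}$ and in Lemma \ref{l26}.
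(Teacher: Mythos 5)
Your proof is correct and takes essentially the same route as the paper's: both reduce via Lemma \ref{l26} to $T=R\circ k$ with $R$ a tree on $\gamma_k$ vertices, factor out $4^{(k-1)\gamma_k}$ from the pendent-path vertices so that $\Pi_1(T)=4^{(k-1)\gamma_k}f(R)^2$ and $\Pi_2(T)=4^{(k-1)\gamma_k}h(R)$, and then invoke Lemmas \ref{add} and \ref{addd}, with equality precisely when $R\cong S_{\gamma_k}$, i.e., $T\cong T_{n,k,\gamma_k}$. If anything, you are slightly more careful than the paper, which neither rules out case $(1)$ of Lemma \ref{l26} explicitly nor spells out the degree bookkeeping on the attached paths.
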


\begin{proof} By Lemma \ref{l26}, we have $T=R \circ k$ for some tree $R$  on $\gamma_k$ vertices. For $w\in V(R)$,  $d_R(w)=d_T(w)-1$.
Thus
\begin{eqnarray*}
 %\begin{align*}
 \Pi_1(T)&=& \prod_{w\in V(R)}d^2_T(w)  \prod_{z\in V(T)\setminus V(R)}d^2_T(z)\\
 & =&\left(\prod_{w\in V(R)}(d_R(w)+1)^2\right) (2^2)^{(k-1)\gamma_k}\\
 &=&4^{(k-1)\gamma_k}\cdot f^2(R),
\end{eqnarray*}
where $f(R)=\prod\limits_{w\in V(R)}(d_R(w)+1)$. By Lemma \ref{add}, $f^2(R)\ge \left(2^{\gamma_k-1}\cdot \gamma_k\right)^2$ with equality if and only if $R\cong S_{\gamma_k}$. Therefore
\[
 \Pi_1(T)\ge 4^{k\gamma_k-1}(\gamma_k)^2
\]
with equality if and only if  $T=R \circ k$ with $R\cong S_{\gamma_k}$, i.e.,  $T\cong T_{n,k,\gamma_k}$.

Similarly,
\begin{eqnarray*}
 \Pi_2(T)&=&\prod_{w\in V(R)}(d_T(w))^{d_T(w)}\prod_{z\in V(T)\setminus V(R)}(d_T(z))^{d_T(z)}\\
  &=&\left(\prod_{w\in V(R)}(d_R(w)+1)^{d_R(w)+1}\right) 4^{(k-1)\gamma_k}\\
  & = & 4^{(k-1)\gamma_k}h(T),
  \end{eqnarray*}
where $h(T)=\prod\limits_{w\in V(R)}(d_R(w)+1)^{d_R(w)+1}$. By Lemma \ref{addd}, $h(T)\le 4^{\gamma_k-1} (\gamma_k)^{\gamma_k}$ with equality if and only if $R\cong S_{\gamma_k}$. Therefore
\[
 \Pi_2(T)\le 4^{k\gamma_k-1}(\gamma_k)^{\gamma_k}
\]
with equality if and only if  $T=R \circ k$ with $R\cong S_{\gamma_k}$, i.e.,  $T\cong T_{n,k,\gamma_k}$.

\end{proof}

\begin{lemma} \label{l32} Let $T$ be a tree of order $n$ with  $\gamma_k(T)=3$, then
\[
\Pi_1(T)\geq 4^{3k-1}(n-3k)^2 \mbox{ and } \Pi_2(T)\leq 4^{3k-1}(n-3k)^{n-3k}.
\]
Either equality holds if and only if $T\cong  T_{n,k,3}$.
\end{lemma}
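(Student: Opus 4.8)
The plan is to induct on $n$, using $n=3(k+1)$ as the base case, which is exactly Lemma \ref{l31} specialized to $\gamma_k=3$ (there $n-3k=3$ and $T_{n,k,\gamma_k}=T_{n,k,3}$). For the inductive step I fix $n>3(k+1)$ and let $T$ be a tree of order $n$ with $\gamma_k(T)=3$ minimizing $\Pi_1$ (the treatment of the maximum of $\Pi_2$ is parallel), and I try to delete one leaf, apply the induction hypothesis, and control the change in each index. Assume for the moment that $T$ has a leaf $w$ with $\gamma_k(T-w)=3$; let $v$ be its neighbour and set $d=d_{T-w}(v)$, so $d_T(v)=d+1$ and $d_T(w)=1$. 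Comparing $T$ and $T-w$ vertex by vertex gives
\[
\Pi_1(T)=\Pi_1(T-w)\Big(\frac{d+1}{d}\Big)^{2}\qquad\text{and}\qquad \Pi_2(T)=\Pi_2(T-w)\,\frac{(d+1)^{\,d+1}}{d^{\,d}}.
\]

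Here $T-w$ is a tree of order $n-1$ with $\gamma_k=3$, so by the induction hypothesis $\Pi_1(T-w)\ge 4^{3k-1}(n-1-3k)^2$ and $\Pi_2(T-w)\le 4^{3k-1}(n-1-3k)^{\,n-1-3k}$, each with equality exactly when $T-w\cong T_{n-1,k,3}$. The decisive extra ingredient is Lemma \ref{l25} applied to $T-w$, which gives $d\le\Delta(T-w)\le (n-1)-3k$. Since $\big(\tfrac{d+1}{d}\big)^2$ is decreasing in $d$ while $\tfrac{(d+1)^{d+1}}{d^{d}}=(d+1)\big(1+\tfrac1d\big)^{d}$ is increasing in $d$, each correction factor is extremized at $d=(n-1)-3k$; substituting that value cancels the powers of $(n-1-3k)$ and leaves precisely $\Pi_1(T)\ge 4^{3k-1}(n-3k)^2$ and $\Pi_2(T)\le 4^{3k-1}(n-3k)^{\,n-3k}$. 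As $T_{n,k,3}$ is an admissible tree realizing these target values and $T$ is a minimizer, equality must in fact hold, which forces both $T-w\cong T_{n-1,k,3}$ and $d=(n-1)-3k=\Delta(T-w)$; the latter makes $v$ the unique maximum-degree vertex, i.e. the centre, of $T_{n-1,k,3}$, and attaching a pendent vertex there turns $T_{n-1,k,3}$ into $T_{n,k,3}$. Lemma \ref{129} is convenient at this point, since it says all such leaves share the single neighbour $v$, so the choice of $w$ is immaterial.

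The step I expect to be the main obstacle is guaranteeing that the minimizer admits a removable leaf, i.e. a leaf $w$ with $\gamma_k(T-w)=3$; this is not automatic for an arbitrary $\gamma_k=3$ tree, the path $P_{4k+3}$ being an example whose two leaves are both critical. A leaf deletion lowers $\gamma_k$ by at most one, so $\gamma_k(T-w)\in\{2,3\}$ for every leaf, and I must exclude the ``critical'' case in which every leaf is critical. I would argue through a scattered triple: because $\gamma_k(T)=3$, the tree contains three vertices pairwise at distance greater than $2k$ (the packing side of distance domination on trees), and since deleting a leaf leaves all other pairwise distances unchanged, any leaf lying outside such a triple is removable. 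Hence $T$ can be critical only if every leaf lies in every maximum scattered triple, which forces $T$ to have at most three leaves and so to be a path or a three-legged spider, necessarily of order at most $4k+3$. A direct computation on these finitely many configurations shows that each one with $n>3(k+1)$ has $\Pi_1$ strictly above, and $\Pi_2$ strictly below, the values attained by $T_{n,k,3}$; since $T_{n,k,3}$ is an admissible competitor, the minimizer cannot be critical once $n>3(k+1)$, and a removable leaf exists. Making the scattered-triple existence and this critical-tree estimate precise is, I expect, the technical heart of the proof.
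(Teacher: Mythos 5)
Your inductive engine is exactly the paper's: remove a leaf $w$ with $\gamma_k(T-w)=3$, write $\Pi_1(T)=\Pi_1(T-w)\bigl(\tfrac{d+1}{d}\bigr)^2$ and $\Pi_2(T)=\Pi_2(T-w)\cdot\tfrac{(d+1)^{d+1}}{d^{d}}$, bound $d+1\le n-3k$ via Lemma \ref{l25}, and exploit that the first correction factor is decreasing and the second increasing in $d$; the equality analysis (forcing $T-w\cong T_{n-1,k,3}$ and $d+1=n-3k$, hence $T\cong T_{n,k,3}$) also matches the paper's. The genuine divergence is where you expected it, the existence of a removable leaf, and there your version is not merely an alternative but a repair. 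The paper asserts (its claim $|B_T|\ge 1$) that \emph{every} tree with $\gamma_k=3$ and $n>3(k+1)$ has a leaf whose deletion preserves $\gamma_k$, arguing along a diametric path that criticality puts both $v_{k+1}$ and $v_{d-k-1}$ into one minimum distance $k$-dominating set, whence $d=2k+2$ and $n=3(k+1)$. Your example $P_{4k+3}$ refutes this claim: it has order $4k+3>3(k+1)$, $\gamma_k=3$, and both leaves critical, and the set $D=\{v_k,v_{2k+1},v_{3k+2}\}$ satisfies all of the paper's normalizations while containing neither $v_{k+1}$ nor $v_{d-k-1}=v_{3k+1}$; the paper's two forced memberships come from different admissible choices of $D$ and cannot be imposed simultaneously, so its proof of the claim is broken. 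Your route --- the Meir--Moon packing/domination equality on trees (from the same reference as Lemma \ref{l24}) produces three vertices pairwise at distance at least $2k+1$; a leaf off some such triple is removable, since leaf deletion preserves the remaining distances and never raises $\gamma_k$; hence a critical tree has at most three leaves and is a path or a three-legged spider, which extremality then excludes --- is sound, and the details you deferred do close: criticality forces each leaf deletion to destroy the shifted leaf triple, which pins the path down to exactly $P_{4k+3}$ and forces a spider with legs $a\le b\le c$ to satisfy $a+b=2k+1$ and $b=c$, hence order at most $4k+2$, confirming your bound $4k+3$; and with $t=n-3k\ge 4$ the checks $4^{t-1}>t^2$, $9\cdot 4^{t-3}>t^2$, $4^{t-1}<t^t$, $27\cdot 4^{t-3}<t^t$ show every critical candidate is strictly worse than $T_{n,k,3}$ for both indices. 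The only structural cost of your argument is that it runs through the extremal tree (the paper's claim, were it true, would apply to arbitrary $T$), but since any tree attaining either bound is itself extremal, both the inequalities and the equality characterizations follow, and Lemma \ref{129} is, as you note, only a convenience.
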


\begin{proof}
By Lemmas \ref{l24} and \ref{l31}, we have $n\geq(k+1)\gamma_k$, and the result holds for $n=(k+1)\gamma_k$. We present our proof by induction on $n$. Suppose that $n>3(k+1)$ and the result is true for $n-1$.

Let $P=v_0\dots v_d$ be a diametric path and $D$ be a minimum distance $k$-dominating set of $T$. We claim that $d\geq 2k+2$, for otherwise,  $\{v_k,v_{k+1}\}$ is a distance $k$-dominating set,  a contradiction.
We may choose distance $k$-dominating set $D$ of  cardinality $\gamma_k(G)$ with $\{v_k, v_{d-k}\}\subseteq D$ such that  $(\cup_{a=0}^{k}V(T_a) \setminus \{v_k\})\cap D = \emptyset$ and  $(\cup_{a=d-k}^{d}V(T_a) \setminus \{v_{d-k}\})\cap D = \emptyset$.

%
%
%Since $\cup_{a=0}^{k}N^a_T(v_o)\cap D \ne \emptyset$, and $\cup_{a=0}^{k}N^a_T(v_o)\subseteq (\cup_{a=0}^{k-1}V(T_a)\cup \{v_k\}) $. Hence $(\cup_{a=0}^{k-1}V(T_a)\cup \{v_k\})\cap D \ne \emptyset$.  For $w \in \cup_{a=0}^{k-1}V(T_a) \setminus \{v_k\}$, we have $\cup_{a=0}^{k}N^a_T(w) \subseteq \cup_{a=0}^{k}N^a_T(v_k)$, and thus we may assume that $v_k \in D$ and $(\cup_{a=o}^{k}N^a_T(v_k)) \cap D = \emptyset$. Similarly, assume that $v_{d-k} \in D$ and $(\cup_{a=d-k}^{k}V(T_a)\setminus \{v_{d-k}\} \cap D = \emptyset$.
Let   $v_0=w_1,v_d=w_2,\dots,w_m$ be all  the pendent vertices of $T$ and $B_T=\{w_i|1\leq i \leq m, \gamma_k(T-w_i)=\gamma_k(T)\}$.

We claim that $|B_T|\ge 1$.

Suppose that $B_T=\emptyset$. Then  $\gamma_k(T-w_i)=\gamma_k(T)-1$ for $1\leq i \leq m$.

For some $i\in \{1,\dots, k,d-k,\dots, d-1\}$,  if $d_T(v_i)\geq 3$, then $V(T_i)\cap \{w_3,\dots, w_m\}\neq \emptyset$. As $\{v_k,v_{d-k}\}\in D$, we have  $\gamma_k(T-z)=\gamma_k(T)$  for $z\in V(T_i)\cap \{w_3,\dots, w_m\}$, a contradiction. It follows that $d_T(v_i)=2$ for $i\in \{1,\dots, k,d-k,\dots,d-1\}$.

As $d_T(v_1)=2$, we have  $\gamma_k(T-v_0)=\gamma_k(T)-1$. Note that $d_T(v_1, v_{k+1})=k$ and  $(\cup_{a=0}^{k}V(T_a) \setminus \{v_k\})\cap D = \emptyset$. Thus $v_{k+1}\in D$. Similarly, $v_{d-k-1}\in D$.  If $d>2k+2$, then  $v_k,v_{k+1},v_{d-k-1},v_{d-k}$ are all  distinct,  a contradiction to the fact that $\gamma_k(T)=3$. Hence $d=2k+2$ and $D=\{v_k,v_{k+1},v_{d-k}\}$.

If $d_T(v_{k+1})=2$, then $T\cong P_{2k+3}$ and $\{v_k,v_{d-k}\}$ is a distance $k$-dominating set, a contradiction. Then $d_T(v_{k+1})\ge 3$ and thus $m\geq  3$. If $m>3$, then  $\gamma_k(T-w_i)=\gamma_k(T)$ for some $i\in\{1,\dots, m\}$, which is impossible. Hence  $m=3$. Thus $T_{k+1}$ is a path with end vertices $v_{k+1}$ and $w_3$. By the definition of distance $k$-domination number, we have   $d(v_{k+1},w_3)=k$.
It follows that  $|V(T)|=3(k+1)$, which is contradiction. This proves our Claim.

Now by our claim and Lemma \ref{129}, we have $|N_T(B_T)|=1$.

%Suppose that $|N_T(B_T)|\geq2$. Let $u$ and $v$ be two distinct vertices in $N_T(B_T)$. For $r\geq1$ and $t\geq1$, let $u_1,\dots,u_r$ and $v_1,\dots, v_t$ are the pendent vertices adjacent to $u$ and $v$ respectively. For some $i\in\{1,\dots,r\}$, if $u_i\in D$, then $D-u_i+u$ is a distance $k$-dominating set of $T$. Assume that $u_i\notin D$, $1\leq i \leq r$ and $v_i\notin D$ for $1\leq i \leq t$. Define $T^\prime=T-\{vv_1\}+\{uv_1\}$ and $T^{\prime\prime}=T-\{uu_1\}+\{vu_1\}$. Then $\gamma_k(T)=\gamma_k(T^\prime)=\gamma_k(T^{\prime\prime})$. By Lemma \ref{l23}, we have either  $\Pi_1(T^\prime)<\Pi_1(T)$ or $\Pi_1(T^{\prime\prime})<\Pi_1(T)$, and either $\Pi_2(T^\prime)>\Pi_2(T)$ or $\Pi_2(T^{\prime\prime})>\Pi_2(T)$, a contradiction. Hence  $|N_T(B_T)|=1$.

%To determine extremal trees among $\bar{T}_{n,k,3}$, we assume that $T\in\{T^\star\in \bar{T}_{n,k,3}| |N_{T^\star}(S_{T^\star})|=1\}$.
Let $w$ be a pendent vertex such that  $\gamma_k(T-w)=\gamma_k(T)$ and $z$ being a unique vertex adjacent to $x$. Then by lemma \ref{l25}, we have $d_T(z)\leq n-k\gamma_k$.
\[
\frac{d_T(z)}{d_T(z)-1}\ge \frac{n-k\gamma_k}{n-1-k\gamma_k}
\]
with equality if and only if $d_T(z)=n-k\gamma_k$.
Note that
\[
 \Pi_1(T)=\Pi_1(T-w)\cdot \frac{d_T(z)^2}{(d_T(z)-1)^2}.
\]
By induction hypothesis, we have
  \begin{eqnarray*}
  \Pi_1(T)&\geq & 4^{k\gamma_ k-1}(n-1-k\gamma_k)^2\cdot \left(\frac{d_T(z)}{(d_T(z)-1)}\right)^2\\
  & \geq &4^{k\gamma_ k-1}(n-k\gamma_k)^2
  \end{eqnarray*}
with equalities  if and only if $T-w\cong T_{n-1,k,\gamma_k}$ and  $d_T(z)=\Delta= n-k\gamma_k$, i.e., $T\cong  T_{n,k,\gamma_k}$.

Let
\[
g(t)=\frac{x^x}{(x-1)^{x-1}}.
\]
Obviously, $g'(x)=g(x)\log \frac{x}{x-1}>0$ for $x>1$. Thus $g(x)$ is strictly increasing for $x>1$, implying that $g(d_T(z))\le g(n-k\gamma_k)$ with equality if and only if
$d_T(z)=n-k\gamma_k$.
Similarly as above,
\[
\Pi_2(T)=\Pi_2(T-w)\cdot\frac{(d_T(z))^{d_T(z)}}{(d_T(z)-1)^{d_T(z)-1}}=\Pi_2(T-x) g(d_T(z)),
\]
and by induction hypothesis, we have
  \begin{eqnarray*}
    \Pi_2(T) &\leq & 4^{k\gamma_ k-1}(n-1-k\gamma_k)^{n-1-k\gamma_k}g(d_T(z))\\
    &\le & 4^{k\gamma_ k-1}(n-1-k\gamma_k)^{n-1-k\gamma_k}g(n-k\gamma_k)\\
  & = &4^{k\gamma_ k-1}(n-k\gamma_k)^{n-k\gamma_k}
  \end{eqnarray*}
with equalities holds if and only if $T-w\cong T_{n-1,k,\gamma_k}$ and  $d_T(z)=\Delta= n-k\gamma_k$, i.e., $T\cong  T_{n,k,\gamma_k}$.
 \end{proof}

 Let $D$ be a distance $k$-dominating set of a graph $G$.
 Let $N^a_G(v)$ be the set of vertices with distance $a$ from $v$. A vertex $v\in V(G)$ is called a private $k$-neighbor of $u$ with respect to $D$ if $\cup_{a=0}^{k}N^a_G(v)\cap D = \{u\}$, that is $d_G(v,u)\leq k$ and $d_G(v, x)\geq k+1$, for any vertex $x\in D\setminus \{u\}$.

 \begin{theorem}
     Let $T$ be a tree of order $n$ with distance $k$-domination number $\gamma_k\geq3$. Then
\[
\Pi_1(T)\geq 4^{k\gamma_k-1}(n-k\gamma_k)^2 \mbox{ and }
\Pi_2(T)\leq 4^{k\gamma _k-1}(n-k\gamma_k)^{n-k\gamma _k}.
\]
Either equality holds  if and only if $T\cong  T_{n,k,\gamma _k}$.
 \end{theorem}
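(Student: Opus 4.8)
The plan is to fix $k\ge 2$ and $\gamma_k\ge 3$ and to induct on $n\ge(k+1)\gamma_k$, following the template of the proof of Lemma~\ref{l32} verbatim except at one point. By Lemma~\ref{l24} we do have $n\ge(k+1)\gamma_k$, and the base case $n=(k+1)\gamma_k$ is exactly Lemma~\ref{l31}, which supplies both inequalities together with the identification of $T_{n,k,\gamma_k}$ as the unique extremal tree. So I would assume $n>(k+1)\gamma_k$ and that the theorem holds for every tree of order $n-1$ with distance $k$-domination number $\gamma_k$. Once one knows that some pendent vertex can be deleted without changing $\gamma_k$, the two estimates follow from the earlier lemmas with no further computation.

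The whole difficulty is concentrated in re-proving the claim $B_T\ne\emptyset$ for arbitrary $\gamma_k\ge 3$. In Lemma~\ref{l32} this was obtained by a counting peculiar to $\gamma_k=3$ (four forced path-vertices cannot lie in a dominating set of size three); that bookkeeping collapses for larger $\gamma_k$, so I would instead exploit the private $k$-neighbor notion introduced just before the statement. Fix a minimum distance $k$-dominating set $D$ with $|D|=\gamma_k$; by minimality each $u\in D$ has a private $k$-neighbor. Arguing by contradiction, suppose $B_T=\emptyset$, so that $\gamma_k(T-w)=\gamma_k-1$ for every pendent vertex $w$. The criticality of $w$ rules out two things at once: its nearest vertex of $D$ cannot lie at distance strictly less than $k$ (otherwise $w$ would be over-dominated and deletable, i.e.\ $w\in B_T$), and the path joining $w$ to that vertex cannot branch (a branch would carry a second, non-critical pendent vertex). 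Hence each pendent vertex is the tip of a non-branching path of length exactly $k$ emanating from a distinct vertex of $D$.

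Because these paths hang off distinct vertices of $D$, they are pairwise vertex-disjoint and contribute $k$ vertices apiece; together with the $\gamma_k$ vertices of $D$ this already exhibits $(k+1)\gamma_k$ vertices of $T$. The remaining—and most delicate—point is to verify that under $B_T=\emptyset$ there can be no further vertices: any vertex lying outside these paths and outside $D$ would ultimately produce a pendent vertex dominated by two members of $D$, hence a non-critical pendent again lying in $B_T$. Granting this, $T$ is exactly an $R\circ k$ tree in the sense of Lemma~\ref{l26}, so $n=(k+1)\gamma_k$, contradicting $n>(k+1)\gamma_k$. Establishing this last exhaustion uniformly in $\gamma_k$ is the step I expect to require the most care, and it is the only place where the argument departs from that of Lemma~\ref{l32}.

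Once $B_T\ne\emptyset$ is secured the argument is mechanical. By Lemma~\ref{129}, $|N_T(B_T)|=1$; let $z$ be the common neighbor and pick $w\in B_T$, so that $T-w$ is a tree of order $n-1$ with $\gamma_k(T-w)=\gamma_k$, to which the induction hypothesis applies, while Lemma~\ref{l25} gives $d_T(z)\le n-k\gamma_k$. From
\[
\Pi_1(T)=\Pi_1(T-w)\cdot\frac{d_T(z)^2}{(d_T(z)-1)^2},\qquad
\Pi_2(T)=\Pi_2(T-w)\cdot\frac{d_T(z)^{d_T(z)}}{(d_T(z)-1)^{d_T(z)-1}},
\]
together with the elementary facts that $x\mapsto\bigl(x/(x-1)\bigr)^2$ decreases while $g(x)=x^{x}/(x-1)^{x-1}$ increases for $x>1$, the induction hypothesis on $T-w$ yields $\Pi_1(T)\ge 4^{k\gamma_k-1}(n-k\gamma_k)^2$ and $\Pi_2(T)\le 4^{k\gamma_k-1}(n-k\gamma_k)^{n-k\gamma_k}$. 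Equality forces $T-w\cong T_{n-1,k,\gamma_k}$ together with $d_T(z)=n-k\gamma_k$, which is precisely $T\cong T_{n,k,\gamma_k}$, closing the induction.
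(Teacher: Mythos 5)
Your inductive skeleton (remove $w\in B_T$, apply Lemma~\ref{l25} and the monotonicity of $\bigl(x/(x-1)\bigr)^2$ and $g(x)=x^x/(x-1)^{x-1}$) is fine and is exactly the mechanism of Lemma~\ref{l32}; but the claim on which you concentrate ``the whole difficulty'' --- that $B_T=\emptyset$ forces $T\cong R\circ k$ and hence $n=(k+1)\gamma_k$ --- is \emph{false} for arbitrary trees, and your sketch of it never invokes the extremality of $T$. Take $k=2$ and $T=P_{11}$: then $\gamma_2(P_{11})=\lceil 11/5\rceil=3$ while $\gamma_2(P_{10})=2$, so deleting either leaf drops $\gamma_k$ and $B_T=\emptyset$, yet $n=11>9=(k+1)\gamma_k$ and $T$ is not of the form $R\circ 2$. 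More generally $P_{(2k+1)m+1}$ with $\gamma_k=m+1$ refutes the claim for every $\gamma_k\ge 3$ (e.g.\ $P_{16}$, $k=2$, $\gamma_2=4$). The same example exposes the intermediate steps: for the minimum set $D=\{v_1,v_5,v_8\}$ in $P_{11}$ the critical leaf $v_0$ is at distance $1<k$ from $D$, so criticality of a pendent vertex does not pin its nearest $D$-vertex at distance exactly $k$; the assertion that the hanging paths end at \emph{distinct} vertices of $D$ is never justified; and the ``exhaustion'' fails outright, since $v_3,v_4,v_6,v_7$ lie outside $D$ and outside both hanging paths without producing any further pendent vertex. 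At best the claim could hold for the extremal tree (note $\Pi_1(P_{11})=4^9>4^5\cdot 25=\Pi_1(T_{11,2,3})$, so $P_{11}$ is not extremal), but then one needs a structural argument about extremal trees that your proposal does not supply --- so the induction on $n$ cannot get started.

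This is precisely why the paper does \emph{not} run your induction for $\gamma_k\ge 4$. It proves a $B_T\neq\emptyset$-type claim only in the case $\gamma_k=3$ (Lemma~\ref{l32}), where the forced membership of four distinct path vertices in a $3$-element dominating set gives the contradiction, and even there only for the extremal tree via Lemma~\ref{129}. For general $\gamma_k$ it abandons single-pendent-vertex deletion altogether: it normalizes the extremal tree along a diametric path (each $T_i$ a star by Lemma~\ref{l22}, at most one branching vertex among $v_1,\dots,v_k$ by Lemma~\ref{l23}), transfers the pendent edges at that vertex to $v_{k+1}$ to form $T''$ with controlled change in $\Pi_1$ and $\Pi_2$, uses private $k$-neighbors to show $\gamma_k\bigl(T''-\{v_0,\dots,v_k\}\bigr)=\gamma_k-1$, and then inducts on $\gamma_k$ by deleting the whole pendent path of $k+1$ vertices, with Lemma~\ref{l32} as the base case. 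Unless you can prove your nonemptiness claim for extremal trees --- a statement your counterexamples show must genuinely use extremality --- your route does not close, and the step you yourself flagged as needing ``the most care'' is a genuine gap, not a deferred technicality.
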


\begin{proof}
Let $P=v_0\dots v_d$ be a diametric path of $T$. Define $B_T=\{w\in V(T)| d_T (w)=1  \mbox{ and } \gamma_k(T-w)=\gamma_k(T)\}$.
We may choose distance $k$-dominating set $D$ of  cardinality $\gamma_k$ with $\{v_k, v_{d-k}\}\subseteq D$ such that  $(\cup_{a=0}^{k}V(T_a) \setminus \{v_k\})\cap D = \emptyset$ and  $(\cup_{a=d-k}^{d}V(T_a) \setminus \{v_{d-k}\})\cap D = \emptyset$.

If $B_T=\emptyset$ then for $i=0,d,  \gamma_k(T-v_i)=\gamma_k(T)-1$. If $B_T \neq \emptyset$, then by  Lemma~\ref{129}, $|N_T(B_T)|=1$. If $v_0,v_d\in B_T$, then since $d-1>1$, we have $\{v_1, v_{d-1}\}\subseteq |N_T(B_T)|$ and thus $|N_T(B_T)|>1$, a contradiction. Thus, in either case,
we may assume that  $\gamma_k(T-v_0)=\gamma_k(T)-1$, and thus
$\{v_k,v_{k+1},v_{d-k}\}\subseteq D$.

For $i=1, \dots, k$, if some $T_i$ is not a star with center $v_i$, then  applying  Lemma \ref{l22} for a non-pendent edge $e$ of $T_i$ to obtain  a tree $T_e$, we have $\Pi_1(T_e)< \Pi_1(T)$ and
     $\Pi_2(T_e)> \Pi_2(T)$. Thus, we may assume that   $T_i$ is  a star with center $v_i$ for all $i=1, \dots, k$.

If there are at least two vertices, say $v_i$ and $v_j$ with $1\le i<j\le k$,  with degree at least $3$ in $T$, then by Lemma \ref{l23} we may find a tree $T'$ by moving the pendent edges at $v_i$ to $v_j$ or via  such that  $\Pi_1(T')< \Pi_1(T)$ and
     $\Pi_2(T')> \Pi_2(T)$. So we may assume that there is at most one vertex among vertices $v_1, \dots, v_k$ with degree at least $3$.
That is, among vertices $v_1, \dots, v_k$, either each  vertex has degree $2$ or exactly one vertex, say  $v_{i_0}$ has at least one pendent neighbor, where $1\le i_0\le k$.
Let
    \begin{eqnarray*}  T''&=&T- \{v_{i_0}z|z\in N_{T}(v_{i_0})\setminus \{v_{i_0-1},v_{i_0+1}\}\}
    \\
    && + \{v_{k+1}z|z\in N_{T}(v_{i_0})\setminus \{v_{i_0-1},v_{i_0+1}\}\}.
     \end{eqnarray*}
Let $s$ is  number of pendent edges at $v_{i_0}$. If $s=0$, then $T=T''$.

Suppose that $s\ge 1$.
Then for $i=1,\dots,k, d_{T''}(v_i)=2$ and $D$ is minimum distance $k$-dominating set of $T''$. Let $PN_{k,D}(z)$ be the set of all private $k$-neighbors of $z$ with respect to $D$ in $T''$. Then for any $z\in \bigcup_{a=0}^{k}N^a_{T''}(v_k)\setminus \{v_0, \dots,v_k\}$, $d_{T''} (z, v_{k+1})\le k$.  It follows that $D\setminus \{v_k\}$ is a distance $k$-dominating set of the tree $T''-\{v_0,\dots,v_k\}$. Also, $PN_{k,D}(v_{k+1})\subseteq V(T'')\setminus\{v_0,\dots,v_k\}$.  It shows that  $D\setminus \{v_k\}$ is a minimum distance $k$-dominating set of $T''-\{v_0,\dots,v_k\}$. Thus $\gamma_k(T''-\{v_0, \dots,v_k\})=\gamma_k -1$ and $\gamma_k(T''-\{v_0, \dots,v_{k-1}\})=\gamma_k -1$.

Let $\Gamma=\prod_{u\in V(T)\setminus \{v_{i_0}, v_{k+1}\}}d^2_{T}(u)$ and $\Phi=\prod_{u\in V(T)\setminus \{v_{i_0}, v_{k+1}\}}(d_{T}(u))^{d_{T}(u)}$.
Then
\begin{eqnarray*}
    \Pi_1(T)-\Pi_1(T'')&=&\left((2+s)^2 d_{T}^2(v_{k+1})-4 \left(d_{T}(v_{k+1})+s\right)^2\right)\Gamma\\
     &=&s(d_{T}(v_{k+1})-2)\left(s\left(d_{T}(v_{k+1})+2\right)+4d_{T}(v_{k+1})\right)\Gamma\\
     &\ge& 0,
     \end{eqnarray*}
     and thus $\Pi_1(T)\ge \Pi_1(T'')$ with equality if and only if  $d_{T}(v_{k+1})=2$. Also
      \begin{eqnarray*}
      \Pi_2(T'')-\Pi_2(T)&=&\left(4\left(d_{T}(v_{k+1})+s\right)^{d_{T}(v_{k+1})+s}-(s+2)^{s+2}(d_{T}(v_{k+1}))^{d_{T}(v_{k+1})}\right)\Phi\\
      &=& F(s)\Phi\\
      &\ge & 0,
      \end{eqnarray*}
where $F(s)=4\left(d_{T}(v_{k+1})+s\right)^{d_{T}(v_{k+1})+s}-(s+2)^{s+2}(d_{T}(v_{k+1}))^{d_{T}(v_{k+1})}$. It is easy to check that $F(s)$ is an increasing function for $s\ge0$. %Then $F(s)\ge F(0)=0$, the inequality above holds.
Thus  $\Pi_2(T'')\ge \Pi_2(T)$ with equality if and only if  $d_{T}(v_{k+1})=2$.

Now we have shown that $\Pi_1(T)\ge \Pi_1(T'')$ and $\Pi_2(T'')\ge \Pi_2(T)$ with
either equality if and only if $s=0$ (i.e., $T=T''$) or  $d_{T}(v_{k+1})=2$.

In the following,  we prove that, $\Pi_1(T'')\geq 4^{k\gamma_k-1}(n-k\gamma_k)^2$ and  $\Pi_2(T'')\leq 4^{k\gamma _k-1}(n-k\gamma_k)^{n-k\gamma _k}$ with either equality  if and only if $T''\cong T_{n,k,\gamma _k}$.

By Lemma \ref{l32},  the result holds for $n\ge(k+1)\gamma_k$ and $\gamma_k=3$. Suppose that $\gamma_k\geq4$, and the result is true for  $n\geq(k+1)(\gamma_k-1)$.

Note that  $\gamma_k(T''-\{v_0, \dots,v_k\})=\gamma_k -1$ and $|V(T''-\{v_0, \dots, v_k\})|=n-k-1 > (k+1)(\gamma_k -1)$. Then
\begin{eqnarray*}
        \Pi_1(T'')&=&\Pi_1(T''-\{v_0, \dots,v_k\})\cdot \left(\frac{d_{T''}(v_{k+1})}{(d_{T''}(v_{k+1})-1)}\right)^2\cdot
       \prod_{i=0}^{k}d^2_{T''}(v_i)\\
        &\geq&\Pi_1(T''_{n-k-1,k,\gamma _k-1})\cdot \left(\frac{d_{T''}(v_{k+1})}{(d_{T''}(v_{k+1})-1)}\right)^2\cdot 4^{k}\\
       & = & 4^{k(\gamma_ k-1)-1}\cdot(n-k-1-k(\gamma_k-1))^2\cdot \left(\frac{d_{T''}(v_{k+1})}{(d_{T''}(v_{k+1})-1)}\right)^2\cdot 4^{k}\\
        &\ge &4^{k\gamma_ k-1}\cdot(n-1-k\gamma_k)^2\cdot\frac{(n-k\gamma_k)^2}{(n-1-k\gamma_k)^2}\\
        %&=&4^{k\gamma_ k-1}\cdot(n-k\gamma_k)^2 \cdot \left[\frac{(n-1-k\gamma_k)^2}{(n-k\gamma_k)^2}\right]\cdot\left[\frac{d_{T''}(v_{k+1})}{d_{T''}(v_{k+1})-1}\right]^2\\
%       &=&4^{k\gamma_k-1}(n-k\gamma_k)^2\left[\frac{\frac{n-1-k\gamma_k}{n-k\gamma_k}}{\frac{d_{T''}(v_{k+1})-1}{d_{T''}(v_{k+1})}}\right]^2\\
        & =& 4^{k\gamma_ k-1}(n-k\gamma_k)^2
        \end{eqnarray*}
with equalities if and only if $T''-\{v_0,\dots,v_k\}\cong  T''_{n-k-1,k,\gamma _k-1}$, and $d_{T''}(v_{k+1})=\Delta=n-k\gamma_k$. Recall that for $i=1,\dots,k$,  $d_{T''}(v_i)=2$.
Thus $\Pi_1(T'')\ge 4^{k\gamma_ k-1}(n-k\gamma_k)^2$ with equality if and only if $T''\cong T_{n,k,\gamma_k}$.

Similarly, we have
\begin{eqnarray*}
        \Pi_2(T'')&= &\Pi_2(T''-\{v_0, \dots,v_k\}) \cdot\frac{(d_{T''}(v_{k+1}))^{d_{T''}(v_{k+1})}}{(d_{T''}(v_{k+1})-1)^{d_{T''}(v_{k+1})-1}}
         \cdot \prod_{i=0}^{k}(d_{T''}(v_i))^{d_{T}(v_i)}\\
       & \le &\Pi_2(T''_{n-k-1,k,\gamma _k-1})\cdot 4^{k}\cdot
        \frac{(d_{T''}(v_{k+1}))^{d_{T''}(v_{k+1})}}{(d_{T''}(v_{k+1})-1)^{d_{T''}(v_{k+1})-1}}\\
        &=& 4^{k(\gamma_ k-1)-1}(n-k-1-k(\gamma_k-1))^{n-k-1-k(\gamma_k-1)}\cdot 4^{k}\\
       &&  \cdot  \frac{(d_{T''}(v_{k+1}))^{d_{T''}(v_{k+1})}}{(d_{T''}(v_{k+1})-1)^{d_{T''}(v_{k+1})-1}}\\
        & \leq &4^{k\gamma_ k-1}(n-k\gamma_k)^{n-k\gamma_k}
\end{eqnarray*}
with equalities if and only if $T''-\{v_0, \dots,v_k\}\cong  T''_{n-k-1,k,\gamma _k-1}$, and $d_{T''}(v_{k+1})=\Delta=n-k\gamma_k$. Also for $i=1,\dots,k$,  $d_{T''}(v_i)=2$.
Thus $\Pi_2(T'')\le 4^{k\gamma_ k-1}(n-k\gamma_k)^{n-k\gamma_k}$ with equality if and only if $T\cong T_{n,k,\gamma_k}$.

Now  we conclude that $\Pi_1(T)\geq \Pi_1(T'')\geq 4^{k\gamma_ k-1}(n-k\gamma_k)^2$  with equality in the first inequality if and only if $T=T''$ or $T\not\cong T''$ and $d_T(v_{k+1})=2$,
and with equality in the second inequality if and only if $T''\cong  T_{n,k,\gamma_k}$. We show that if $T\not\cong T''$ and $d_T(v_{k+1})=2$, then $T''\not\cong  T_{n,k,\gamma_k}$. Otherwise, say $T''=T_{n,k,\gamma_k}$. As $d_{T''}(v_{k+1})=n-k\gamma_k$, there are $n-(k+1)\gamma_k$ pendent edges at $v_{k+1}$ in $T''$. By the above argument, $d_T(v_{k+1})=\gamma_k>2$, a contradiction. Therefore $\Pi_1(T)\geq 4^{k\gamma_ k-1}(n-k\gamma_k)^2$  with equality if and only if $T\cong T_{n,k,\gamma_k}$.
Similarly, $\Pi_2(T)\leq  4^{k\gamma_ k-1}(n-k\gamma_k)^{n-k\gamma_k}$ with equality if and only if $T\cong T_{n,k,\gamma_k}$.
\end{proof}


\begin{thebibliography}{20}

\bibitem{24} B. Borovicanin, B. Furtula, On extremal Zagreb indices of trees with given domination number, Appl. Math. Comput 279 (2016) 208–218.


\bibitem{8} I. Gutman, On the origin of two degree based topological indices, Bull. Acad. Serbe Sci. Arts. (Cl. Sci. Math.) 146 (2014) 39-52.

\bibitem{2} I. Gutman, Multiplicative Zagreb indices of trees. Bull, Soc. Math. Banja Luka 18 (2011)17-23.

\bibitem{13} I. Gutman, B. Furtula (Eds.), Novel Molecular Structure Descriptors Theory and Applications, University Kragujevac, Kragujevac, 2010, pp.~72-100.

\bibitem{7} I. Gutman, N. Trinajsti\'c, Graph theory and molecular orbitals Total $\pi$-electron energy of alternant hydrocarbons, Chem. Phys. Lett. 17(1972) 535–538.


\bibitem{10} S.M. Hosamani, I. Gutman, Zagreb indices of transformation graphs and total transformation graphs, Appl. Math. Comput 247 (2014) 1156-1160.


\bibitem{9} Y. Hu, X. Li, Y. Shi, T. Xu, I. Gutman, On molecular graphs with smallest and greatest zeroth order general Randi index, MATCH Commun. Math. Comput. Chem 54 (2005) 425-434.
\bibitem{16} A. Iranmanesh, M.A. Hosseinzadeh, I. Gutman, On multiplicative Zagreb indices of graphs, Iran. J. Math. Chem.  3 (2012) 145–154.
\bibitem{17} R. Kazemi, Note on the multiplicative Zagreb indices, Discrete Appl. Math. 198 (2016) 147–154.
\bibitem{18} J. Liu, Q. Zhang, Sharp upper bounds for multiplicative Zagreb indices, MATCH Commun. Math. Comput. Chem. 68 (2012) 231–240.

\bibitem{4} A. Meir, J.M. Moon, Relations between packing and covering numbers of a tree, Pac. J. Math. 61(1975) 225-233.
\bibitem{5} L. Pei, X. Pan, Extremal values on Zagreb indices of trees with given distance $k$-domination number, J. Inequal. Appl.
(2018) Paper No. 16, 17 pp.

\bibitem{14} R. Todeschini, V. Consonni, New local vertex invariants and molecular descriptors based on functions of the vertex degrees, MATCH Commun. Math. Comput. Chem 64 (2010) 359-372.


\bibitem{6} J. Topp. L. Volkmann, On packing and covering numbers of graphs, Discrete Math. 96 (1991) 229-238.



\bibitem{12} F. Wang, F. Belardo, A lower bound for the first Zagreb index and its application, MATCH Commun. Math. Comput. Chem. 74 (2015) 35-56.



\bibitem{15} S. Wang, B. Wei, Multiplicative Zagreb indices of k-trees, Discrete Appl. Math. 180 (2015) 168–175.




\bibitem{19} S. Wang, C. Wang, L. Chen, J.B. Liu, On extremal multiplicative Zagreb indices of trees with given number of vertices of maximum degree, Discrete Appl. Math. 227 (2017) 166–173.


%\bibitem{20} Chang. GJ, Nemhauser. GL, The k-domination and k-stability problems on sun-free chordal graphs. SIAM J. Algebraic Discrete Methods. 5 (1984) 332-345.
%\bibitem{21} P. Dankelmann, Average distance and domination number, Discrete Appl. Math. 80 (1997)21-35.
%\bibitem{22} Tian, F, Xu, JM, Average distance and distance domination numbers. Discrete Appl. Math. 157 (2009) 1113-1127.
%\bibitem{23} He. CX, Wu. BF, Yu, ZS, On the energy of trees with given domination number, MATCH Commun. Math. Comput.
%   Chem. 64 (2010)169-180.



\bibitem{1} S. Wang, C. Wang, J.-B. Liu, On extremal multiplicative Zagreb indices of trees with given domination number, Appl. Math. Comput 332 (2018) 338–350.

%\bibitem{11} K. Xu, K.C. Das, S. Balachandran, Maximizing the Zagreb indices of $( n, m )$ graphs, MATCH Commun. Math. Comput. Chem. 72 (2014) 641-654.

\bibitem{3} K. Xu, H. Hua, A unified approach to extremal multiplicative Zagreb indices for trees, unicyclic and bicyclic graphs, MATCH Commun. Math. Comput. Chem. 68 (2012) 241-256.

\bibitem{Z} B. Zhou, Remarks on Zagreb indices, MATCH Commun. Math. Comput. Chem. 57 (2007) 591-596.

\bibitem{ZG} B. Zhou, I. Gutman, Further properties of Zagreb indices, MATCH Commun. Math. Comput. Chem. 54 (2005) 233-239.
%Bo Zhou, Upper bounds for the Zagreb indices and the spectral radius of series-parallel graphs, Int. J. Quantum Chem. 107 (2007) 875--878.
\end{thebibliography}
\end{document}